\DeclarePairedDelimiter{\norma}{\lVert}{\rVert}
\numberwithin{equation}{section}
\newtheorem{theorem}[equation]{Theorem}
\newtheorem{definition}[equation]{Definition}
\newtheorem{remark}[equation]{Remark}
\newtheorem{lemma}[equation]{Lemma}
\newcounter{counter_a}
\newcommand\dom{\operatorname{dom}}
\newcommand{\R}{\mathbb{R}}
\newcommand{\rd}{\mathrm{d}}
\newcommand\cD{\mathcal D}
\newcommand\cE{\mathcal E}
\newcommand\cM{\mathcal M}
\newcommand\cH{\mathcal H}
\newcommand\ov\overline
\newcommand\eps\varepsilon
\renewcommand\epsilon\varepsilon
\renewcommand\rho\varrho
\newcommand\al\alpha
\newcommand\la\lambda
\newcommand\ds\displaystyle
\newcommand\p\partial
\newcommand{\wto}{\rightharpoonup}
\title{On the eigenvalues of the biharmonic operator with Neumann boundary conditions on a thin set}
\author{Francesco Ferraresso\footnote{Cardiff University\,, School of Mathematics\,, Maths and Education Building , Senghennydd Road\,, Cardiff, CF24 4AG. Email:\,ferraressof@cardiff.ac.uk}  and Luigi Provenzano\footnote{Sapienza Universit\`a di Roma\,, Dipartimento di Scienze di Base e Applicate per l'Ingegneria\,, Via Scarpa 16\,, 00161 Roma\,, Italy. Email:\, luigi.provenzano@uniroma1.it}}
\begin{document}

\maketitle

\noindent
{\bf Abstract:}
Let $\Omega$ be a bounded domain in $\mathbb R^2$ with smooth boundary $\partial\Omega$, and let $\omega_h$ be the set of points in $\Omega$ whose distance from the boundary is smaller than $h$. We prove that the eigenvalues of the biharmonic operator on $\omega_h$ with Neumann boundary conditions converge to the eigenvalues of a limiting problem in the form of system of differential equations on $\partial\Omega$.

\vspace{11pt}

\noindent
{\bf Keywords:} Biharmonic operator, Neumann boundary conditions, thin domain.

\vspace{6pt}
\noindent
{\bf 2020 MSC:} Primary 35J40. Secondary 35B25, 35J35, 35P20.

\section{Introduction and statement of the main result}

Let $\Omega$ be a bounded domain in $\mathbb R^2$ with smooth boundary $\partial\Omega$. For $h>0$, we define the domain $\omega_h$  as
\begin{equation}\label{omega_h}
\omega_h:=\left\{x\in\Omega: {\rm dist\ }(x,\partial\Omega)<h\right\}.
\end{equation}
We consider the Neumann eigenvalue problem for the biharmonic operator in $\omega_h$, namely
\begin{equation}\label{classic_N}
\begin{cases}
\Delta^2u_h=\mu(h)\,u_h\,,& {\rm in\ }\omega_h,\\
\partial^2_{\nu\nu}u_h=0\,, & {\rm on\ }\partial\omega_h,\\
{\rm div}_{\partial\omega_h}\!(D^2 u_h\cdot\nu)_{\partial\omega_h}+\partial_{\nu}\Delta u_h=0\,, & {\rm on\ }\partial\omega_h,
\end{cases}
\end{equation}
in the unknowns $u_h\in C^4(\omega_h)\cap C^3(\overline{\omega_h})$ (the eigenfunction) and $\mu(h)\in\mathbb R$ (the eigenvalue). Here $\nu$ denotes the outer unit normal to $\omega_h$, $D^2u_h$ denotes the Hessian of $u_h$, ${\rm div}_{\partial\omega_h}$ denotes the tangential divergence on $\partial\Omega$, and $(D^2 u_h\cdot\nu)_{\partial\omega_h}$ denotes the projection of $D^2 u_h\cdot\nu$ on the tangent space $T\partial\omega_h$.

In this paper we are interested in the asymptotic behaviour of the solutions of problem \eqref{classic_N} as $h\rightarrow 0^+$. When $h$ is close to zero, we refer to $\omega_h$ as to a {\it thin domain}, which eventually collapses to the planar curve representing $\partial\Omega$ as $h\rightarrow 0^+$, see Figure \ref{fig:1}.

The analysis of eigenvalue problems for differential operators on thin domains has attracted noticeable interest in recent years, see e.g., \cite{ferraresso_tri,arrieta_mpereira,ArrVil2,bor_freitas,brandolini,CarKhr,casado,GauGomPer,grieser,krej_3,krej_2,nakasato_mpereira,NazPerTas,mpereira_rossi} and references therein. A somehow complementary point of view is adopted in the asymptotic analysis of domains with small holes or perforations, see e.g.,  \cite{ABCM,DRM,FFT,Lan,NazGul}. Since the literature on this topic is quite vast, our list is far from being  exhaustive. In the case of linear partial differential operators of second order subject to homogeneous Neumann boundary conditions, it is well-known that it is often possible to reduce the dimension of the problem by ignoring the thin directions, see e.g., \cite{HalRau}. The rigorous mathematical justification of the corresponding asymptotic analysis ansatz is usually very delicate and relies on a set of techniques which depends on the particular problem. For the asymptotic analysis of the Neumann Laplacian on fixed disjoint domains joined by thin cylindrical tubes, or {\it dumbbell domains}, we refer to \cite{ACLC,Jim,JimKos}. The same operator has been studied on a thin neighbourhood of a graph in \cite{KucZen}, and on thin domains with oscillating boundaries in \cite{ArrVil2}.

As for higher order operators, in \cite[\S 4]{AFL17} the analysis of the biharmonic operator with Poisson coefficient $\sigma$ and Neumann boundary conditions on a thin rectangle $(0,1)\times (0,h)\subset\R^2$  has shown that the techniques used for the Laplacian can still be employed in order to reduce the dimension and find the correct limiting problem as $h\rightarrow 0^+$. However, differently from the Neumann Laplacian, in which case the limiting operator is $-d^2/dx^2$ on $(0,1)$, the eigenvalues of the Neumann biharmonic operator converge to the eigenvalues of the fourth order operator $(1 - \sigma^2) d^4/dx^4$. In fact, when $\sigma \neq 0$ the derivatives along the thin directions give a non-trivial contribution in the limit. This result casts a shadow on whether the `natural' asymptotic analysis ansatz, namely the negligible contribution of the thin directions to the limiting problem, is valid for the biharmonic operator on a thin domain.

Inspired by the previous discussion, in this article we take a further step in the analysis of the operator $\Delta^2$ with Neumann boundary conditions on general smooth bounded thin domains of $\R^2$. Note that, differently from the case of the rectangle $(0,1)\times(0,h)$, the thin domain $\omega_h$ defined by \eqref{omega_h} collapses to a closed curve.

We recall that, in applications, the biharmonic operator $\Delta^2$ is used to model the transverse vibrations of a plate of negligible thickness whose position at rest is described by the shape of the domain, according to the Kirchhoff-Love model for elasticity. The possibility of the plate to assume non-trivial displacement at the boundary is then modelled by Neumann boundary conditions, also called boundary conditions for the free plate. We refer to  \cite{bourlard_nicaise,colbois_provenzano_bi,giroire_nedlec,Nadai,nazaret} for more details on the physical justification of the problem and for historical information. See also \cite[\S 10]{weinstock_book}. 
In our analysis, the plate is thin in a second direction (the direction normal to the boundary), which eventually vanishes. Hence, in the limit, we are left with a one-dimensional vibrating curved object, which is usually referred as to a beam or a rod. Linear elasticity theory for vibrating straight rods is quite well established, see e.g., \cite{banks,banks0,weinstock_book}. For curved rods, we refer to \cite{SaPa,Ju,SaHu} for the derivation of a corresponding mathematical model. In particular, the analysis therein is carried out in the framework of linear elasticity for a three dimensional tube of small width around a curve. The model is obtained by sending the width to zero. The resulting limiting problem can be written in the form of a system which depends on the curvature of the underlying curve. In our case, we start from the Kirchhoff model for a plate (therefore a first dimensional reduction has already been performed), and then we push the remaining dimension to zero. Our results should be then compared with those of \cite{SaPa,Ju,SaHu}. We also mention \cite{gapa} where the authors consider a biharmonic eigenvalue problem on a thin multi-structure with vanishing thickness and Dirichlet boundary conditions.

Problem \eqref{classic_N} will be understood in a weak sense. Namely, we consider the following problem
\begin{equation}\label{weak_N}
\int_{\omega_h}D^2u_h:D^2\phi \, dx=\mu(h)\int_{\omega_h}u_h\,\phi \, dx\,,\ \ \ \forall \phi\in H^2(\omega_h),
\end{equation}
in the unknowns $u_h\in H^2(\omega_h)$ and $\mu(h)\in\mathbb R$. Here $D^2 u:D^2v$ denotes the standard product of Hessians $D^2 u:D^2v:=\sum_{i,j=1}^2\partial^2_{x_ix_j}\!u \,\,\partial^2_{x_ix_j}v$. Since $\Omega$ has smooth boundary, there exists $\bar h>0$ such that for all $h\in(0,\bar h)$ the domain $\omega_h$ is smooth as well. Thus, for this choice of $h$, problem \eqref{weak_N} is well-posed and admits an increasing sequence of non-negative eigenvalues diverging to $+\infty$ of the form
\[
0=\mu_1(h)=\mu_2(h)=\mu_3(h)<\mu_{4}(h)\leq\cdots\leq\mu_j(h)\leq\cdots\nearrow+\infty.
\]
The corresponding eigenfunctions can be chosen to define a Hilbert basis of $L^2(\omega_h)$. For fixed $h \in (0, \bar{h})$, due to the smoothness assumptions on $\Omega$, any solution to \eqref{weak_N} is actually a classical solution, i.e., it solves \eqref{classic_N}, see \cite[\S 2.5]{gazzola}. The eigenvalue $\mu(h)=0$ has multiplicity $3$ and the corresponding eigenspace is spanned by $\left\{1,x_1,x_2\right\}$. In other words, the eigenspace coincides with the set of polynomials of degree at most one.

For the reader's convenience, we recall the analogous problem for the Neumann Laplacian:
\begin{equation}\label{Neumann_L}
\begin{cases}
-\Delta u_h=m(h)\, u_h\,, & {\rm in\ }\omega_h,\\
\partial_{\nu}u_h=0\,, & {\rm on\ }\partial\omega_h.
\end{cases}
\end{equation}
In this case we have
\[
0=m_1(h)<m_2(h)\leq\cdots\leq m_j(h)\leq\cdots\nearrow +\infty.
\]
It is well-known that
\[
\lim_{h\rightarrow 0^+}m_j(h)=\lambda_j
\]
where $\lambda_j$ are the eigenvalues of $
-\Delta_{\partial\Omega}u=\lambda u$ on $\partial\Omega$ and $-\Delta_{\partial\Omega}$ is the Laplacian (or Laplace-Beltrami operator) on $\partial\Omega$. We refer to \cite{schatzman} for a detailed analysis of this problem in any space dimension $n\geq 2$. In the case $n=2$, the limiting problem in the arc-length parametrization of $\partial\Omega$ is just $
-u''(s)=\lambda u(s)$, $s\in(0,|\partial\Omega|)$
with $u(0)=u(|\partial\Omega|)$, $u'(0)=u'(|\partial\Omega|)$. Here $s$ is the arc-length parameter and $|\partial\Omega|$ is the length of $\partial\Omega$.

In the present article, we shall focus only on the case $n=2$. The case $n\geq 3$ can be treated essentially in the same way. However, we point out the appearance of technicalities, quite involved computations and very long formulae. We believe that the case $n=2$ already shows the main features and highlights the peculiar behaviour of the biharmonic operator under the considered singular perturbation. We shall postpone the technical details and computations for higher dimensions in a future note.

The present paper had its origin in two pivotal observations that underline the stark difference between the biharmonic operator and the Laplace operator with Neumann boundary conditions on two-dimensional thin domains. First, the result of \cite{schatzman}  cannot hold in the case of the biharmonic operator. In fact, it is well-known that the eigenvalues of the biharmonic operator $\Delta_{\partial\Omega}^2$ on $\partial\Omega$ are exactly the squares of the Laplacian eigenvalues $\lambda_j$ on $\partial\Omega$ whenever $\partial\Omega$ is sufficiently smooth, see e.g., \cite[\S 5.8]{colbois_provenzano_bi}. In particular, the first eigenvalue of $\Delta_{\partial\Omega}^2$ is $\lambda_1^2=0$, while the second is $\lambda_2^2>0$. On the other hand, when $n=2$, $\lim_{h\rightarrow 0^+}\mu_j(h)=0$ for $j=1,2,3$, hence the eigenvalues of the biharmonic operator on $\partial\Omega$ are not the limits of the eigenvalues of the biharmonic operator on $\omega_h$ as $h\rightarrow 0^+$.

A second motivation comes from explicit computations in the unit disk $\Omega=B(0,1) \subset \mathbb R^2$. In this situation, we observe that the limiting eigenvalues of problem \eqref{classic_N} are of the form $\frac{2\ell^2(\ell^2-1)^2}{1+2\ell^2}$ for $\ell\in\mathbb N$. The eigenvalue corresponding to $\ell=0$ is simple, and the associated eigenfunction constant. The eigenvalues corresponding to $\ell\geq 1$ have multiplicity two, with associated eigenfunctions lying in the linear span of $\cos(\ell s),\sin(\ell s)$, $s\in(0,2\pi)$. In particular, zero is an eigenvalue of multiplicity three, as one expects. See Subsection \ref{disk} for more details.

It is quite surprising that the index $\ell$ appears also at the denominator in the expression of the limiting eigenvalues. This suggests that the limiting problem is in the form of a system of differential equations rather than a single eigenvalue equation. This is exactly what we prove.
\begin{theorem}\label{main}
Let $\mu_j(h)$, $j\in\mathbb N\setminus\{0\}$, be the eigenvalues of problem \eqref{classic_N}. Then $\lim_{h\rightarrow 0^+}\mu_j(h)=\eta_j$ for all $j\in\mathbb N\setminus\{0\}$, where $\eta_j$ is the $j$-th eigenvalue of the following problem
\begin{equation}\label{classic_system}
\begin{cases}
u''''-2(\kappa^2 u')'-(\kappa w)''-2(\kappa w')'=\eta u, & {\rm in\ }(0,|\partial\Omega|),\\
-2w''+\kappa^2 w-\kappa u''-2(\kappa u')'=0, & {\rm in\ } (0,|\partial\Omega|),\\
u^{(k)}(0)=u^{(k)}(|\partial\Omega|), & k=0,1,2,3\\
w^{(k)}(0)=w^{(k)}(|\partial\Omega|), & k=0,1.
\end{cases}
\end{equation}
in the unknowns $u(s)$, $w(s)$ and $\eta$ (the eigenvalue). Here $s$ is the arc-length parameter describing $\partial\Omega$ and $\kappa(s)$ denotes the curvature of the boundary at the point $s\in(0,|\partial\Omega|)$.
\end{theorem}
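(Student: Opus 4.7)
The plan is to flatten $\omega_h$ via tubular coordinates around $\partial\Omega$, rescale the thin direction to unit thickness, and then identify the $\Gamma$-limit of the rescaled quadratic forms. For $h\in(0,\bar h)$, the map $(s,t)\mapsto \gamma(s) - t\nu(s)$, with $\gamma$ an arc-length parametrisation of $\partial\Omega$ and $\nu$ the outer unit normal, diffeomorphically identifies $\omega_h$ with the $|\partial\Omega|$-periodic-in-$s$ strip $(0,|\partial\Omega|) \times (0,h)$, with Euclidean volume element $(1-t\kappa)\,ds\,dt$. Decomposing the Euclidean Hessian in the Frenet frame $(T,N)$ of $\partial\Omega$ gives
\begin{align*}
D^2 u(T,T) &= \partial_s^2 u - \kappa\, \partial_t u + O(t), \\
D^2 u(T,N) &= \partial_s\partial_t u + \kappa\, \partial_s u + O(t), \\
D^2 u(N,N) &= \partial_t^2 u,
\end{align*}
so that $|D^2u|^2 = D^2u(T,T)^2 + 2\, D^2u(T,N)^2 + D^2u(N,N)^2$ admits an explicit expansion in powers of $t$ with coefficients polynomial in $\partial_s^a\partial_t^b u$, $\kappa$ and $\kappa'$.

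Setting $\tau = t/h$ pulls the problem back to the fixed cylinder $Q:=(0,|\partial\Omega|) \times (0,1)$, periodic in $s$. In terms of $v(s,\tau):=u_h(s,h\tau)$, the rescaled energy and $L^2$-form read
\[
\int_{\omega_h}\!|D^2 u_h|^2\,dx = h\!\int_Q\bigl[h^{-4}(\partial_\tau^2 v)^2 + 2h^{-2}(\partial_{s\tau}v)^2 + (\partial_s^2 v)^2 + h^{-2}\kappa^2(\partial_\tau v)^2 + \cdots\bigr](1-h\tau\kappa)\,d\tau\,ds,
\]
and $\int_{\omega_h}u_h^2\,dx = h\int_Q v^2(1-h\tau\kappa)\,d\tau\,ds$. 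The coercive $h^{-3}$-term in $\int_Q(\partial_\tau^2 v)^2$ forces eigenfunctions with uniformly bounded $\mu_j(h)$ to be asymptotically affine in $\tau$, motivating the two-scale ansatz
\[
v_h(s,\tau) = u_h(s) + h\tau\, w_h(s) + h^2 r_h(s,\tau),
\]
with $(u_h,w_h)$ bounded in $H^2_{\rm per}(0,|\partial\Omega|)\times H^1_{\rm per}(0,|\partial\Omega|)$ and $r_h$ (chosen with no affine-in-$\tau$ part) negligible at the relevant scale. Substituting and integrating in $\tau$, the leading $O(h)$ part of the energy collapses to
\[
h\int_0^{|\partial\Omega|}\bigl[(u_h''-\kappa w_h)^2 + 2(w_h' + \kappa u_h')^2\bigr]\,ds,
\]
which is exactly the bilinear form associated to \eqref{classic_system}: the $w$-Euler-Lagrange (without eigenvalue coupling) recovers the second, constraint equation, while the $u$-Euler-Lagrange with multiplier $\eta$ (from the quotient with $\int u_h^2\, ds$) recovers the first.

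The formal two-scale computation will be upgraded to convergence of the spectrum through a Mosco/$\Gamma$-convergence argument for the family $h^{-1}\int_{\omega_h}|D^2\cdot|^2$ on $L^2(\omega_h)$: the $\limsup$ inequality is provided by the explicit recovery sequence $v(s,\tau):=u(s)+h\tau w(s)$, and the $\liminf$ follows from lower semicontinuity of the limit form together with weak compactness of $(u_h, w_h)$. Compactness requires an upper bound on $\mu_j(h)$, obtained by plugging in \eqref{weak_N} test functions supported on $\omega_h$ that are constant along $t$, and a Poincar\'e-type inequality on $\omega_h$ for $H^2$-functions orthogonal to the kernel $\mathrm{span}\{1,x_1,x_2\}$, proved by the same change of variables on $Q$. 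The main technical obstacle lies in the bookkeeping of the curvature-dependent cross terms in the Hessian expansion -- specifically the interactions $-\kappa\, \partial_t u$ in $D^2u(T,T)$ and $\kappa\, \partial_s u$ in $D^2u(T,N)$ -- which, after squaring, produce the asymmetric couplings $-2\kappa u''w$ and $4\kappa u'w'$ in the limit form. These are precisely the terms responsible for the non-trivial second equation of \eqref{classic_system} and for the $\ell$-dependent factor $(1+2\ell^2)^{-1}$ observed on the disk, and they survive in the limit only once the remainder $r_h$ and the $O(h)$ corrections to the metric are controlled uniformly so that no spurious contribution leaks in.
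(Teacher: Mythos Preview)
Your approach is genuinely different from the paper's and largely sound. The paper does \emph{not} use $\Gamma$-convergence: it proves compact convergence of the resolvents $A_h^{-1}\to A_0^{-1}$ in the Stummel--Vainikko sense. After the same change to tubular coordinates and the rescaling $t\mapsto h\tau$, it shifts the form by a large constant $M$, establishes a uniform coercivity estimate on the fixed cylinder, and then passes to the limit in the Poisson problem by testing successively with $h^2\varsigma(s,\tau)$, $h\tau\theta(s)$ and $\psi(s)$; these three choices isolate, respectively, the vanishing of the weak limit of $h^{-2}\partial_\tau^2\tilde u_h$, the constraint equation for $w$, and the equation for $u$. Spectral convergence then follows from an abstract theorem. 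Your $\Gamma$/Mosco route with the two-scale recovery sequence $u(s)+h\tau w(s)$ is a legitimate alternative, and your identification of the limit quadratic form as
\[
\int_0^{|\partial\Omega|}\bigl[(u''-\kappa w)^2 + 2(w'+\kappa u')^2\bigr]\,ds
\]
is correct---indeed it makes the structure of \eqref{classic_system} more transparent than the paper's own presentation, since the Euler--Lagrange equations of this functional (with multiplier $\eta$ against $\int u^2$) are exactly the two equations of the system.

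The one substantive gap in your sketch is the compactness for $w_h$. From bounded rescaled energy you control $h^{-2}\|\partial_\tau^2 v\|_{L^2}$, $h^{-1}\|\partial_{s\tau}v\|_{L^2}$ and $h^{-1}\|\kappa\,\partial_\tau v\|_{L^2}$, but \emph{not} $h^{-1}\|\partial_\tau v\|_{L^2}$ directly, because $\kappa$ may vanish on a set of positive measure. Without this bound you cannot conclude that $w_h$ is bounded even in $L^2$, and the liminf half of your argument stalls. The Poincar\'e inequality you invoke (orthogonality to $\{1,x_1,x_2\}$) controls $\|v\|$ by $\|D^2 v\|$, which is a different issue. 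The paper closes the actual gap by observing, via Gauss--Bonnet, that $\int_{\partial\Omega}\kappa\,ds=2\pi$, so $|\kappa|\geq a>0$ on some open arc $J_a$; a Poincar\'e--Wirtinger inequality with average taken over $J_a\times(0,1)$ then upgrades the bound on $h^{-1}\|\kappa\,\partial_\tau v\|$ (together with those on $h^{-1}\|\partial_{s\tau}v\|$ and $h^{-2}\|\partial_{\tau\tau}v\|$) to a bound on $h^{-1}\|\partial_\tau v\|$. You will need precisely this ingredient, and it is essential: when $\kappa\equiv 0$ the argument genuinely fails and the limiting problem is different, as the paper remarks.
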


\begin{remark}
In the case of the unit circle we have that a solution $(u,w)$ corresponding to an eigenvalue $\frac{2\ell^2(\ell^2-1)^2}{1+2\ell^2}$ is given by $u(s)=A\cos(\ell s)+B\sin(\ell s)$, $w(s)=-\frac{3\ell^2}{1+2\ell^2}u(s)$, with $A,B\in\mathbb R$.
\end{remark}

\begin{remark}
Define $\Delta_{\kappa} = \big(- \frac{\rd^2 }{\rd x^2} + \frac{\kappa^2}{2} \big)$. Note that \eqref{classic_system} can be rewritten as a single equation by setting $w = \Delta_\kappa^{-1} (\kappa u''/2 + (\kappa u')')$, thus yielding
\[
u''''-2(\kappa^2 u')'-\left(\kappa \Delta_\kappa^{-1} \left(\frac{\kappa u''}{2} + (\kappa u')'\right)\right)''-2\left(\kappa \left(\Delta_\kappa^{-1} \left(\frac{\kappa u''}{2} + (\kappa u')'\right)\right)'\right)'=\eta u.
\]
This equation is evidently different from both the free beam equation with lateral tension $\tau$, namely $u'''' - \tau u'' = \eta u$, and the buckled beam equation, namely $u'''' = \eta u''$. It seems to us that \eqref{classic_system} behaves more like a non-local free beam with lateral tension and with variable coefficients depending on the curvature $\kappa$. 
Note that the operator $\Delta_{\kappa}$ is strictly positive because $\kappa$ is not identically zero. Indeed, it is not difficult to prove that the following Poincar\'e inequality holds: $\norma{u'}^2_{L^2((0,|\partial\Omega|))} + \frac{1}{2}\norma{\kappa u}^2_{L^2((0,|\partial\Omega|))} \geq c \norma{u}^2_{L^2((0,|\partial\Omega|))}$ for all $u \in \dom(\Delta_\kappa)$ with $c>0$ independent of $u$. If instead $\kappa \equiv 0$, constant functions are in the kernel of $\Delta_\kappa$; in fact, the case $\kappa \equiv 0$ is substantially different, see Remark \ref{rmk:polygons}. 
\end{remark}

\begin{remark}
Theorem \ref{main} shows that the thin limit of the Neumann biharmonic operator is a system of equations, therefore transforming a scalar operator acting on functions of two variables into a vector operator acting on functions of one variable. In elasticity theory, transformations of this kind are not infrequent: for example, it is well-known that the limit as $t \to 0^+$ of the Reissner-Mindlin model for plates of non-negligible thickness $t > 0$  in $\R^3$ (which is in the form of a system acting on functions of three variables) is the Kirchhoff-Love model, which involves a single equation acting on functions of two variables. However, to the best of our knowledge the result in Theorem \ref{main} is the first example of a thin limit process transforming a scalar operator into a system, in a sense going in the opposite direction compared to the Reissner-Mindlin to Kirchhoff-Love singular limit.
\end{remark}

\begin{remark}
One can check that $(u_0(s),w_0(s))=(1,0)$ and $(u_i(s),w_i(s))=(x_i(s),-\nu_i(s))$, $i=1,2$, span the set of solutions of \eqref{classic_system} corresponding to $\eta=0$. Here $x_i(s)$ denotes the restriction of the coordinate function $x_i$ to $\partial\Omega$, expressed in the arc-length variable $s$, while $\nu_i(s)$ denotes the $i$-th component of the outer unit normal $\nu(s)$ at the point of $\partial\Omega$ described by $s$. This is expected from our convergence result, since the spectral projection on the zero eigenspace converge pointwise and ${\rm span} \{1, x_1, x_2\}$ is the eigenspace associated to $\mu(h) = 0$ in \eqref{weak_N} for all $h\in(0,\bar h)$.
\end{remark}

\begin{remark} \label{rmk:polygons}
The case where $\Omega$ is a polygon in $\R^2$ cannot be deduced from Theorem \ref{main} for two main reasons. First, a polygon does not have the regularity required by the tubular neighbourhood theorem (Theorem \ref{tubular0}), which we heavily exploit in the proof. Second, a polygon with straight edges has curvature $\kappa = 0$ a.e. in $\partial \Omega$. In the proof of Theorem \ref{main}, it is needed $\kappa \neq 0$ on a set of positive measure instead. Moreover, from the considerations above on the multiplicity of the zero eigenvalue of problems \eqref{classic_N} and \eqref{classic_system}, we realise that the limiting problem in the case of a polygon cannot coincide with \eqref{classic_system} with $\kappa=0$ which is exactly the closed problem for the biharmonic operator on $\partial\Omega$.
We believe that the case of the polygon is more involved and should be treated as in \cite{KucZen}, that is, by using asymptotic analysis for elliptic differential operators on fattened graphs. We plan to analyse this problem in a future note.
\end{remark}

\begin{remark} \label{rmk:Poisson} In our analysis we started from the biharmonic operator with zero Poisson ratio $\sigma$. In two dimensions, the Poisson ratio $\sigma$ is allowed to take values in $(-1,1)$. A choice of $\sigma\in(-1,1)$ will result in a change of the quadratic form in the weak formulation  \eqref{weak_N}. Namely, at the left-hand side of \eqref{weak_N} we would have
$$
\int_{\omega_h} (1-\sigma)D^2u_h:D^2\phi+\sigma\Delta u_h\Delta\phi dx.
$$
In principle, it is possible to consider this more general setting. However, even assuming that the case $\sigma=0$ has been settled, the passage to the limit in the general case is not straightforward, as one can already see in \cite{AFL17}. Therefore, for the purposes of the present paper, we shall focus only on the emblematic case $\sigma=0$, and postpone the technical analysis of $\sigma\in(-1,1)$ to a future note.
\end{remark}

The proof of Theorem  \ref{main} relies on the pointwise convergence of the resolvent operators associated with problem \eqref{weak_N} to the resolvent operator associated with problem \eqref{classic_system}. Therefore, not only we obtain pointwise convergence of the eigenvalues, but also convergence of the projections on the eigenspaces in the sense of Stummel-Vainikko (see \cite[\S 4]{AFL17}). This type of convergence is called {\it discrete convergence} in the work by Stummel \cite{stummel} and ${\mathcal P}{\mathcal Q}$-{\it convergence} in the works by Vainikko \cite{vainikko_1,vainikko_2}, see \cite{bogli} for comparison and equivalence results.

The present paper is organised as follows. In Section \ref{prel} we recall a few preliminary results on Sobolev spaces, on curvilinear coordinate systems in tubular neighbourhoods, and on standard spectral theory for problems \eqref{weak_N} and \eqref{classic_system}. Moreover, we recall the relevant results on convergence of compact operators and their spectral convergence. Section \ref{proof_main} is dedicated to the proof of our main Theorem \ref{main}. Section \ref{remarks} contains a few final remarks. In particular, it contains a brief discussion on the case of tubes of variable size and some explicit computations in the unit circle.

\section{Preliminaries and notation}\label{prel}
\subsection{Function spaces}
Let $\Omega$ be an open set in  $\mathbb R^2$.  By $H^1(\Omega)$ we denote the Sobolev space  of functions $u\in L^2(\Omega)$ with all weak  derivatives of order one in $L^2(\Omega)$.  The space $H^1(\Omega)$ is  endowed with the scalar product
\[
\langle u, v\rangle_{H^1(\Omega)}:=\int_{\Omega} \nabla u\cdot\nabla v+uv \,dx\,,\ \ \ \forall u,v\in H^1(\Omega)
\]
which induces the norm $\|u\|_{H^1(\Omega)}:=\left(\int_{\Omega} |\nabla u|^2+u^2 \, dx\right)^{\frac{1}{2}}$. \\
By $H^2(\Omega)$ we denote the Sobolev space  of functions $u\in L^2(\Omega)$ with all weak  derivatives of order one and two in $L^2(\Omega)$.   The space $H^2(\Omega)$ is  endowed with the scalar product
\[
\langle u, v\rangle_{H^2(\Omega)}:=\int_{\Omega} D^2u : D^2v+\nabla u\cdot\nabla v+uv\, dx\,,\ \ \ \forall u,v\in H^2(\Omega).
\]
which induces the norm $\|u\|_{H^2(\Omega)}:=\left(\int_{\Omega} |D^2u|^2+ |\nabla u|^2+u^2\, dx\right)^{\frac{1}{2}}$. The spaces $H^k(\Omega)$, $k\geq 2$, are naturally defined in a similar way.

When the domain is sufficiently smooth the space $H^2(\Omega)$ can be endowed with the scalar product
\[
\langle u, v\rangle_{H^2(\Omega)}:=\int_{\Omega} D^2u : D^2v+uv \, dx\,,\ \ \ \forall u,v\in H^2(\Omega).
\]
which induces the equivalent norm $\|u\|_{H^2(\Omega)}:=\left(\int_{\Omega} |D^2u|^2+u^2 \,dx\right)^{\frac{1}{2}}$. This is the case of Lipschitz domains.

The spaces $H^k(\Sigma)$ are defined in a similar way when $\Sigma$ is a Riemannian surface, with or without boundary (or, in general, a Riemannian manifold, see e.g., \cite{hebey}).

Finally, by $H^k_p((0,|\partial\Omega|))$ we denote the closure in $H^k((0,|\partial\Omega|))$ of the space $C^{\infty}_p((0,|\partial\Omega|))$, which consists of those functions in $C^{\infty}((0,|\partial\Omega|))$ with $u^{(k)}(0)=u^{(k)}(|\partial\Omega|)$ for all $k\in\mathbb N$.

\subsection{Tubular neighbourhoods of smooth boundaries and local coordinate systems}
We start this subsection by recalling the following well-known result from \cite{federer}
\begin{theorem}\label{tubular0}
Let $k\geq 2$ and let $\Omega$ be a bounded domain in $\mathbb R^2$ of class $C^k$. Then there exists $h>0$ such that every point in $\omega_h$ has a unique nearest point on $\partial\Omega$. Moreover, the function ${\rm dist}(\cdot,\partial\Omega)$ is of class $C^k$ in $\omega_h$.
\end{theorem}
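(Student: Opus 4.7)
The plan is to apply the inverse function theorem to a tubular parametrisation of $\partial\Omega$, and then to exploit an envelope-type identity for the distance function to bootstrap one extra degree of smoothness beyond what the inverse function theorem supplies directly.

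First, I would fix a local $C^k$ arc-length parametrisation $\gamma:I\to\R^2$, $|\gamma'|\equiv 1$, of a portion of $\partial\Omega$, with associated outer unit normal $\nu(s)$ obtained by rotating $\gamma'(s)$ by $-\pi/2$, and define the tubular map $F(s,t):=\gamma(s)+t\,\nu(s)$. Since $\gamma\in C^k$ we have $\nu\in C^{k-1}$ and hence $F\in C^{k-1}$; at every base point $(s,0)$ the columns of $\Diff F(s,0)$ are $\gamma'(s)$ and $\nu(s)$, forming an orthonormal basis of $\R^2$. The inverse function theorem, applicable since $k-1\geq 1$, therefore provides a local $C^{k-1}$ diffeomorphism. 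Covering the compact manifold $\partial\Omega$ by finitely many such charts and invoking compactness, I would extract a uniform $h_0>0$ and a global map $F:\partial\Omega\times(-h_0,h_0)\to\R^2$ which is locally a $C^{k-1}$ diffeomorphism onto an open neighbourhood of $\partial\Omega$.

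Next I would globalise injectivity to obtain the nearest-point statement. The local injectivity and compactness yield a uniform $\delta>0$ such that $F$ is injective on any subset of $\partial\Omega\times(-h_0,h_0)$ of diameter smaller than $\delta$. If two points $p_1,p_2\in\partial\Omega$ both realised $\dist(x,\partial\Omega)$ for some $x\in\omega_h$, then the triangle inequality gives $|p_1-p_2|\leq 2h$, so choosing $h<\min\{h_0,\delta/2\}$ forces $p_1=p_2$. This establishes uniqueness of the nearest-point projection $p:\omega_h\to\partial\Omega$ and identifies $F^{-1}$ on $\omega_h$ with the pair $(p,-\dist(\cdot,\partial\Omega))$; in particular $p\in C^{k-1}(\omega_h;\partial\Omega)$ and $\dist(\cdot,\partial\Omega)\in C^{k-1}(\omega_h)$.

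To upgrade from $C^{k-1}$ to $C^k$ I would use an envelope identity for $d(x):=\dist(x,\partial\Omega)=|x-p(x)|$. The constraint $p(x)\in\partial\Omega$ gives $\Diff p(x)\cdot v\in T_{p(x)}\partial\Omega$ for every $v\in\R^2$, while minimality of $p(x)$ forces $x-p(x)\perp T_{p(x)}\partial\Omega$; differentiating $d^2(x)=|x-p(x)|^2$ and combining these orthogonality facts, the cross terms vanish and I get $\nabla d(x)=-\nu(p(x))$ for all $x\in\omega_h$. Since $\nu\in C^{k-1}(\partial\Omega;\R^2)$ and $p\in C^{k-1}(\omega_h;\partial\Omega)$, the composition gives $\nabla d\in C^{k-1}(\omega_h;\R^2)$, and a continuous function with $C^{k-1}$ gradient is automatically of class $C^k$, which is the claim. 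The main obstacle I anticipate is precisely this final gain of one derivative: a naive reading of the inverse function theorem only delivers $C^{k-1}$, because $F$ itself is limited by the regularity of $\nu$. The envelope identity $\nabla d=-\nu\circ p$ is what allows the bootstrap, by re-expressing the gradient as a composition in which each factor retains its full $C^{k-1}$ regularity and thereby restores the lost derivative to $d$.
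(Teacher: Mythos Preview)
Your argument is correct, and in fact the paper does not supply a proof of this statement at all: it is quoted as a well-known result and attributed to Federer (the reference \cite{federer} in the paper). So there is nothing to compare your approach against; you have provided a self-contained proof where the paper simply cites the literature.

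A brief remark on the content of your sketch: the delicate point, as you correctly identify, is the gain of one derivative for the distance function beyond what the inverse function theorem gives for the tubular map $F$. Your treatment of this via the envelope identity $\nabla d = -\nu\circ p$ is the standard and correct mechanism. The compactness step for global injectivity is stated somewhat informally (the phrase ``subset of diameter smaller than $\delta$'' should really refer to the image under $F$, and one should check that nearest points always exist before invoking uniqueness), but these are routine to fill in and do not affect the validity of the argument.
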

Throughout the rest of the paper we shall denote by $\bar h$ the maximal possible tubular radius of $\Omega$, namely
\begin{equation}\label{barh}
\bar h:=\sup\left\{h>0:{\rm every\  point\  in\ } \omega_{h} {\rm\ has\ a\ unique\ nearest\ point\ on\ } \partial\Omega\right\}.
\end{equation}
From Theorem \ref{tubular0} it follows that if $\Omega$ is smooth, then $\bar h>0$.
\begin{figure}[h]
\centering
\includegraphics[width=0.3\textwidth]{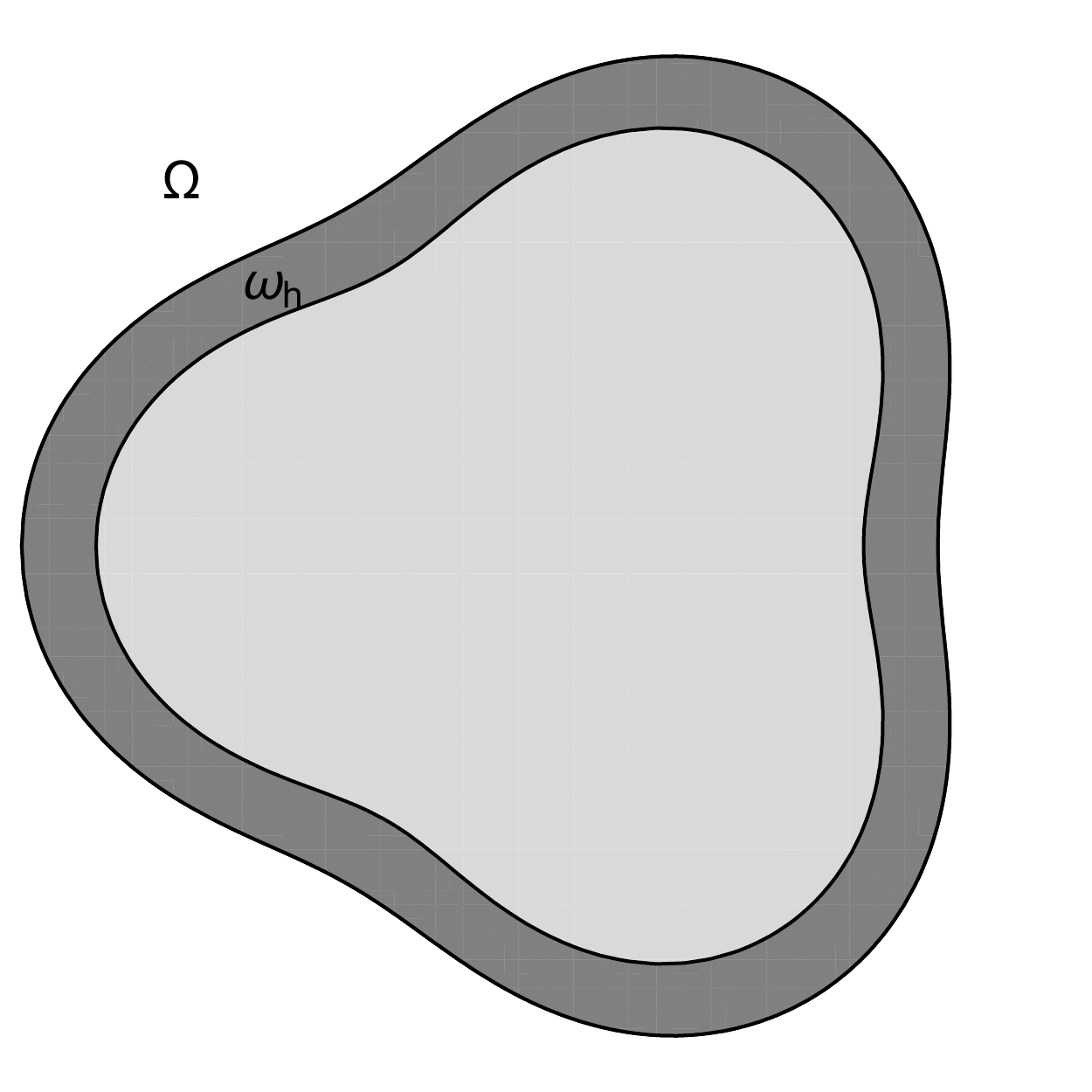}
\caption{A domain $\Omega\subset\mathbb R^2$, and $\omega_h$, a tubular neighbourhood of the boundary of size $h$.}
\label{fig:1}
\end{figure}
Let $y\in\partial\Omega$ and let $\kappa(y)$ denote the curvature of $\partial\Omega$ at $y$ with respect to the outward unit normal. In particular, if $x\in\omega_{{h}}$ and $y\in\partial\Omega$ is the nearest point to $x$ on $\partial\Omega$, then
\begin{equation}\label{bound_curv}
1-{\rm dist}(x,\partial\Omega)\kappa(y)>0,
\end{equation}
see e.g., \cite[Lemma 2.2]{lewis}.

Let $x_0\in\partial\Omega$ be fixed and let $s\in (0,|\partial\Omega|)$ be the arc-length parameter with base point $x_0$, which will correspond to $s=0$ and $s=|\partial\Omega|$. With abuse of notation, we shall often write $s$ to denote the point on $\partial\Omega$ at arc-length distance $s$ from $x_0$. In fact, we will often identify $\partial\Omega$ with the segment $(0,|\p \Omega|)$ where the endpoints have been identified.

By $\nu(s)$ we denote the outward unit normal to $\partial\Omega$ at $s$.

We introduce the map $\Phi_h$ defined by
\[
\Sigma:=\partial\Omega\times (0,1)\ni(s,t)\mapsto\Phi_h(s,t):=s-ht\nu(s)\in\omega_h.
\]
The map $\Phi_h$ is a diffeomorphism of the cylinder $\Sigma$ to $\omega_h$, see e.g., \cite[\S 2.4]{balinskybook}, see also Theorem \ref{tubular0}. In view of the identification of $\partial\Omega$ with $(0,|\p \Omega|)$, $\Phi_h$ can be thought also as a diffeomorphism of $(0,|\p \Omega|)\times(0,1)$ to $\omega_h$. In particular, $t=h^{-1}{\rm dist}(s-ht\nu(s),\partial\Omega)$.
\begin{figure}[h]
\centering
\begin{subfigure}{0.45\textwidth}
\centering
\includegraphics[width=\textwidth]{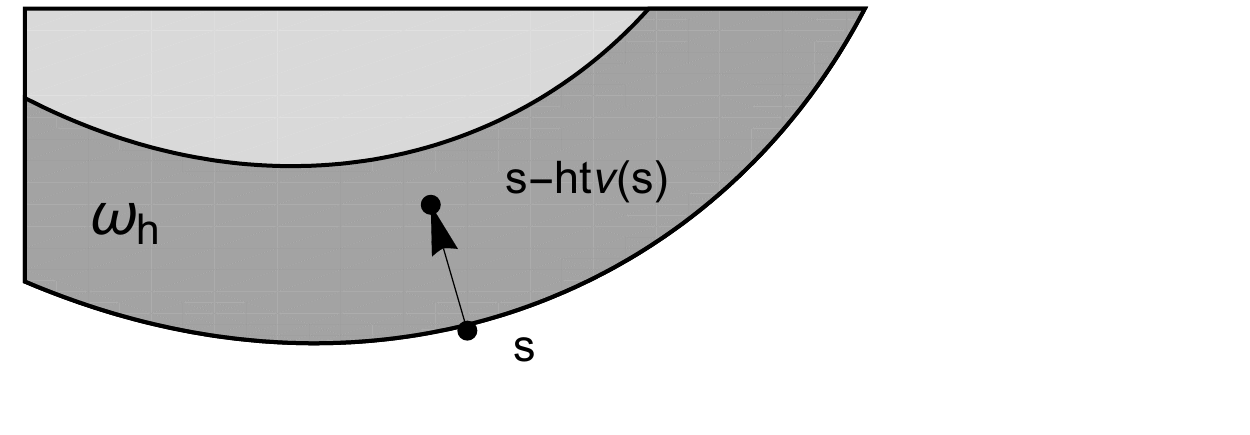}
\end{subfigure}
\begin{subfigure}{0.45\textwidth}
\centering
\includegraphics[width=\textwidth]{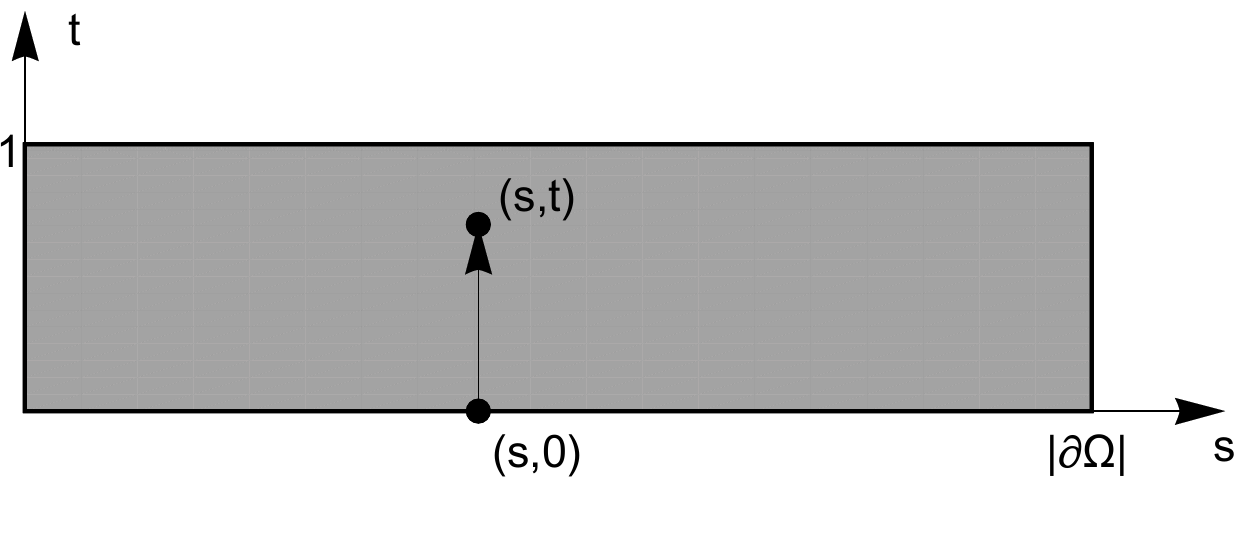}
\end{subfigure}
\end{figure}
The coordinates $(s,t)$ are sometimes called {\it curvilinear coordinates} or {\it Fermi coordinates}, which for a smooth domain are always locally defined near the boundary. In the case of $\omega_h$, they form a {\it global} coordinate system. For an integrable function $f$ on $\omega_h$, we have
\begin{equation}\label{wh_coordinates}
\int_{\omega_h}f(x)dx=\int_{\Sigma}(f\circ\Phi_h)(s,t) h(1-ht\kappa(s))dtds.
\end{equation}
Next, for smooth functions $f,g$ on $\omega_h$ we write  $\nabla f\cdot\nabla g$, $\Delta f$, and $D^2f:D^2g$ in coordinates $(s,t)$. 
Standard computations yield
\begin{equation}\label{grad_2}
\left(\nabla f\cdot\nabla g\right)\circ\Phi_h=\frac{\partial_s(f\circ\Phi_h)\cdot \partial_s(g\circ\Phi_h)}{(1-ht\kappa(s))^2}+\frac{\partial_{t}(f\circ\Phi_h)\partial_{t}(g\circ\Phi_h)}{h^2},
\end{equation}
and
\begin{multline}\label{lap_2}
(\Delta f)\circ\Phi_h
=\frac{1}{1-ht\kappa(s)}\partial_s\left(\frac{1}{1-ht\kappa(s)}\partial_s(f\circ\Phi_h)\right)\\
-\frac{(1-ht\kappa(s))}{h^2}\partial_t\left(\frac{1}{1-ht\kappa(s)}\right)\partial_t(f\circ\Phi_h)+\frac{\partial^2_{tt}(f\circ\Phi_h)}{h^2}.
\end{multline}
We refer e.g., to \cite[\S 2]{DRP} for more details.

In order to express $D^2f:D^2g$ in coordinates $(s,t)$, we shall need the following identity, which is a consequence of the so-called {\it Bochner formula}, holding for smooth functions $f,g$:
\begin{equation}\label{bochner}
D^2f:D^2g=\frac{1}{2}\left(\Delta(\nabla f\cdot\nabla g)-\nabla\Delta f\cdot\nabla g-\nabla f\cdot\nabla\Delta g\right).
\end{equation}
Thanks to \eqref{grad_2}, \eqref{lap_2} and \eqref{bochner} we obtain the following expression
\begin{multline}\label{hess_local}
(D^2f:D^2g)\circ\Phi_h\\
=\frac{\partial^2_{ss}(f\circ\Phi_h)\partial^2_{ss}(g\circ\Phi_h)}{(1-ht\kappa(s))^4}
+\frac{2\partial^2_{st}(f\circ\Phi_h)\partial^2_{st}(g\circ\Phi_h)}{h^2(1-ht\kappa(s))^2}
+\frac{1}{h^4}\partial^2_{tt}(f\circ\Phi_h)\partial^2_{tt}(g\circ\Phi_h)\\
+\frac{ht\kappa'(s)}{(1-ht\kappa(s))^5}(\partial_s(f\circ\Phi_h)\partial^2_{ss}(g\circ\Phi_h)
+\partial^2_{ss}(f\circ\Phi_h)\partial_s(g\circ\Phi_h))\\
-\frac{\kappa(s)}{h(1-ht\kappa(s))^3}(\partial^2_{ss}(f\circ\Phi_h)\partial_t(g\circ\Phi_h)
+\partial_t(f\circ\Phi_h)\partial^2_{ss}(g\circ\Phi_h))\\
+\frac{2\kappa(s)}{h(1-ht\kappa(s))^3}(\partial_s(f\circ\Phi_h)\partial^2_{st}(g\circ\Phi_h)+\partial^2_{st}(f\circ\Phi_h)\partial_s(g\circ\Phi_h))\\
-\frac{t\kappa(s)\kappa'(s)}{(1-ht\kappa(s))^4}(\partial_s(f\circ\Phi_h)\partial_t(g\circ\Phi_h)
+\partial_t(f\circ\Phi_h)\partial_s(g\circ\Phi_h))\\
+\frac{(2\kappa(s)^2(1-ht\kappa(s))^2+h^2t^2\kappa'(s)^2)}{(1-ht\kappa(s))^6}\partial_s(f\circ\Phi_h)\partial_s(g\circ\Phi_h)\\
+\frac{\kappa(s)^2}{h^2(1-t\kappa(s))^2}\partial_t(f\circ\Phi_h)\partial_t(g\circ\Phi_h).
\end{multline}
We omit the details of the computations which are standard but quite long. Now, if $f,g\in H^2(\omega_h)$, then $f\circ\Phi_h, g\circ\Phi_h\in H^2(\Sigma)$, being $\Phi_h$ smooth. By a standard density argument, identity \eqref{hess_local} holds for $f,g\in H^2(\omega_h)$.

Given a function $f\in H^2(\omega_h)$ we will use sometimes the notation $\tilde{f} := f \circ \Phi_h \in H^2(\Sigma)$ to denote the pullback of $f$ via $\Phi_h$.

\subsection{Eigenvalue problems}\label{eig_prob}
We shall recall in this subsection a few fundamental facts involving the spectral analysis of problems \eqref{weak_N} and \eqref{classic_system}.

{\bf  On problem \eqref{weak_N}.}

Let $h\in (0,\bar h)$. We shall consider a shifted version of problem \eqref{weak_N}, namely
\begin{equation}\label{weak_N_shift}
Q_h(u_h,\phi)=\zeta(h)\langle u_h,\phi\rangle_{L^2(\omega_h)},
\end{equation}
in the unknowns $u_h\in H^2(\omega_h)$ and $\zeta(h)\in\mathbb R$, where
\begin{equation}\label{quad form_N}
Q_h(u,\phi):=\int_{\omega_h}D^2u:D^2\phi+M u\phi\, dx\,,\ \ \ \forall u,\phi\in H^2(\omega_h),
\end{equation}
$\langle\cdot,\cdot\rangle_{L^2(\omega_h)}$ is the standard scalar product of $L^2(\omega_h)$, and $M>0$ is a  constant to be chosen. Note that $\zeta(h)$ is an eigenvalue of \eqref{weak_N_shift} if and only if $\mu(h)=\zeta(h)-M$ is an eigenvalue of \eqref{weak_N}, the corresponding eigenfunctions being the same.

Let $A_h$ be the unique positive self-adjoint operator associated to problem \eqref{weak_N_shift} via the equality $(A_h^{1/2} u, A_h^{1/2} \phi) = Q_h(u,\phi)$ for all $u, \phi \in \dom(A_h^{1/2}) = \dom(Q_h) = H^2(\omega_h)$. The existence of the operator $A_h$ is ensured by the second representation theorem, see \cite[Thm. VI.2.23]{kato1}.

The operator $A_h$, $h \in (0,\bar h)$, is positive, self-adjoint, with compact resolvent. Therefore $A_h$ admits an increasing sequence of positive eigenvalues
\[
0<\zeta_1(h)\leq \zeta_2(h)\leq\cdots\leq \zeta_j(h)\leq\cdots\nearrow+\infty.
\]

{\bf On problem \eqref{classic_system}}

We proceed now with the formal definition and main properties of the  operator $\tilde{A}_0$ acting in $L^2(0, |\p \Omega|)$, associated with problem \eqref{classic_system}. It will turn out that $\tilde A_0$ is related to the limit (in a suitable sense, see Subsection \ref{sp_co}) of the operators $A_h$ as $h\to 0^+$.

 Instead of considering directly $\tilde{A}_0$, it is convenient to consider, as in the case of the operators $A_h$, $h > 0$, a shifted version of $\tilde{A}_0$, namely $A_0 = \tilde{A}_0 + M$, where $M$ is the same constant of \eqref{quad form_N}. \\
Let us introduce the ordinary differential operators
\[
S_{\kappa} := \frac{\rd }{\rd s}\bigg( \kappa \frac{\rd }{\rd s} \bigg), \quad T_{\kappa} := \frac{\rd^2 }{\rd s^2}(\kappa \cdot ) \quad \Delta_{\kappa} := \bigg( -\frac{\rd^2 }{\rd s^2} + \frac{\kappa^2}{2} \bigg)
\]
with $\dom(S_{\kappa}) = \dom(T_{\kappa}) = \dom(\Delta_{\kappa}) =H^2_p((0,|\partial\Omega|))$. Then $A_0$ is the pseudodifferential operator, with $\dom(A_0)=H^4_p((0,|\p\Omega|))$, associated with the eigenvalue problem
\begin{multline} \label{eq: limit problem}
\frac{\rd^4 u }{\rd s^4 } - 2 \frac{\rd}{\rd s}\bigg(\kappa^2 \frac{\rd u}{\rd s} \bigg) - \frac{1}{2}T_{\kappa} \Delta_{\kappa}^{-1} T_{\kappa}^* u - 2 S_{\kappa} \Delta_{\kappa}^{-1} S_{\kappa} u - T_{\kappa} \Delta_{\kappa}^{-1} S_{\kappa} u - S_{\kappa} \Delta_{\kappa}^{-1} T_{\kappa}^* u  + M u\\ = \xi u,
\end{multline}
in the unknowns $u$ (the eigenfunction) and $\xi$ (the eigenvalue). Note that \eqref{eq: limit problem} is just an equivalent formulation of \eqref{classic_system}. Namely, $\xi$ is an eigenvalue of \eqref{classic_system} if and only if $\eta=\xi-M$ is an eigenvalue of \eqref{eq: limit problem}.

It will be convenient to introduce the operators
\[
\Gamma u := - \frac{1}{2} T_{\kappa} \Delta_{\kappa}^{-1} T_{\kappa}^* u - 2 S_{\kappa} \Delta_{\kappa}^{-1} S_{\kappa} u - T_{\kappa} \Delta_{\kappa}^{-1} S_{\kappa} u - S_{\kappa} \Delta_{\kappa}^{-1} T_{\kappa}^* u + M u,
\]
with $\dom(\Gamma) = \{ u \in L^2((0,|\partial\Omega|)) : \Gamma u \in L^2((0,|\partial\Omega|)), u(0) = u(|\p \Omega|), u'(0) = u'(|\p\Omega|)\} \subset H^2_p((0, |\p \Omega|))$, and
\[
L u : = \frac{\rd^4 u }{\rd s^4 } - 2 \frac{\rd}{\rd s}\bigg(\kappa^2 \frac{\rd u}{\rd s} \bigg)
\]
with $\dom(L) = H^4_p((0,|\partial\Omega|))$. Note that with these definitions
\begin{equation} \label{eq:A_0}
A_0 := L + \Gamma, \quad \quad \dom(A_0) = \dom(L) = H^4_p((0,|\partial\Omega|)).
\end{equation}

We have the following lemma.

\begin{lemma}
The operator $A_0$ defined in \eqref{eq:A_0} is self-adjoint with compact resolvent, and $A_0 > 0$ provided the constant $M$ in the definition of the operator $\Gamma$ is large enough. Therefore, $A_0$ admits an increasing sequence of positive eigenvalues $0<\xi_1\leq \xi_2\leq\cdots\leq \xi_j\leq\cdots\nearrow+\infty$.
\end{lemma}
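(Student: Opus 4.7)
My plan is to treat the operator $A_0 = L + \Gamma$ as a Kato--Rellich perturbation of the principal fourth-order part $L$ by the non-local lower-order operator $\Gamma$, and to read off the claimed spectral properties from those of $L$.

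As a preliminary, I would establish that $L$, with $\dom(L) = H^4_p((0,|\p\Omega|))$, is self-adjoint, non-negative, has compact resolvent on $L^2((0,|\p\Omega|))$, and satisfies the elliptic estimate $\|u\|_{H^4}\leq C(\|Lu\|_{L^2}+\|u\|_{L^2})$. This is standard one-dimensional ODE theory: $L$ is the Friedrichs extension of the closed non-negative quadratic form $u\mapsto \|u''\|^2_{L^2} + 2\|\kappa u'\|^2_{L^2}$ on $H^2_p$, and compactness of the resolvent follows from the Rellich embedding $H^4_p\hookrightarrow L^2$.

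Next I would analyse $\Gamma_0 := \Gamma - M\,\id$ and check that it is bounded as an operator $H^2_p \to L^2$, symmetric, and $L$-bounded with relative bound zero. Since $\kappa \in C^\infty(\p\Omega)$ and $\Delta_\kappa$ is strictly positive (by the Poincar\'e-type inequality noted in the remark preceding the lemma), elliptic regularity yields $\Delta_\kappa^{-1}: H^{s-2}_p \to H^s_p$ bounded for all $s \geq 2$, while $T_\kappa, T_\kappa^*, S_\kappa$ map $H^s_p \to H^{s-2}_p$ boundedly. Composing, each of the four summands of $\Gamma_0$ is bounded $H^2_p \to L^2$, giving $\|\Gamma_0 u\|_{L^2} \leq C \|u\|_{H^2}$. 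Symmetry on $H^2_p$ follows from $(\Delta_\kappa^{-1})^* = \Delta_\kappa^{-1}$, $S_\kappa^* = S_\kappa$, and $(T_\kappa)^* = T_\kappa^*$: the first two summands of $\Gamma_0$ are individually symmetric, while the last two are swapped by the adjoint operation. Combining $\|\Gamma_0 u\|_{L^2} \leq C \|u\|_{H^2}$ with the standard periodic interpolation $\|u\|_{H^2} \leq \epsilon \|u\|_{H^4} + C_\epsilon \|u\|_{L^2}$ and the elliptic estimate for $L$, I obtain, for every $\epsilon > 0$, $\|\Gamma_0 u\|_{L^2} \leq \tilde C\,\epsilon\, \|Lu\|_{L^2} + C'(\epsilon)\|u\|_{L^2}$, so that $\Gamma_0$ is $L$-bounded with relative bound zero.

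By the Kato--Rellich theorem, $\tilde A_0 = L + \Gamma_0$ is therefore self-adjoint on $\dom(L) = H^4_p$ and bounded below by some constant $-c_0$; hence $A_0 = \tilde A_0 + M$ is self-adjoint on $H^4_p$ and strictly positive whenever $M > c_0$. The estimate $\|Lu\|_{L^2} \leq \|A_0 u\|_{L^2} + \|\Gamma_0 u\|_{L^2}$ together with absorption via the small relative bound shows that the graph norm of $A_0$ is equivalent to $\|\cdot\|_{H^4}$, so $A_0$ has compact resolvent by Rellich; the spectral theorem then delivers a discrete sequence of positive eigenvalues accumulating only at $+\infty$. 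I expect the main technical nuisance to be the bookkeeping needed for the symmetry and mapping properties of $\Gamma_0$, due to its non-local structure and the interplay of $T_\kappa$, $S_\kappa$ with their formal adjoints and with $\Delta_\kappa^{-1}$, though each individual step is routine.
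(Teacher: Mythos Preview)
Your proposal is correct and follows essentially the same Kato--Rellich perturbation strategy as the paper: both treat $A_0$ as the self-adjoint fourth-order operator $L$ perturbed by the second-order non-local term $\Gamma$ with $L$-bound zero, and then read off self-adjointness, semiboundedness, and compact resolvent. The only minor difference is the route to $L$-bound zero: the paper observes that $\Gamma(L-\lambda)^{-1}$ maps $L^2$ into $H^2$ and hence is compact (relative compactness $\Rightarrow$ relative bound zero), whereas you obtain the same conclusion by combining the direct estimate $\|\Gamma_0 u\|_{L^2}\leq C\|u\|_{H^2}$ with the interpolation inequality $\|u\|_{H^2}\leq \epsilon\|u\|_{H^4}+C_\epsilon\|u\|_{L^2}$ and the elliptic estimate for $L$; your explicit verification of the symmetry of $\Gamma_0$ is a point the paper leaves implicit.
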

\begin{proof}
First note that $\Gamma$ is relatively compact with respect to $L$, or equivalently, $\Gamma (L - \la)^{-1}$ is a compact operator in $L^2(0, |\p \Omega|)$ for some (and then all) $\la \in \rho(L)$. Indeed, $\Gamma$ is a pseudodifferential operator of order 2, and $(L - \la)^{-1}(L^2(0, |\p \Omega|) = \dom(L) = H^4_p((0,|\partial\Omega|))$; therefore, $\Gamma (L - \la)^{-1}$ maps $L^2(0, |\p \Omega|)$ in $H^2(0, |\p \Omega|)$. The latter compactly embeds in $L^2(0, |\p \Omega|)$ due to the Rellich-Kondrachov Theorem.\\
The $L$-compactness of $\Gamma$ implies that $\Gamma$ is relatively bounded with respect to $L$ with $L$-bound $0$; that is, there exists constant $a,b > 0$ such that (see \cite[\S IV.1.1]{kato1})
\begin{equation} \label{eq:rel_bound}
\norma{\Gamma u}_{L^2((0,|\p \Omega|))} \leq a \norma{u}_{L^2((0,|\p \Omega|))} + b \norma{L u}_{L^2((0,|\p \Omega|))}, \quad \quad u \in \dom(L)
\end{equation}
and the $L$-bound
\[
b_0 := \inf \{ b \in \R_{\geq 0} : \textup{\eqref{eq:rel_bound} holds} \}
\]
can be chosen equal to zero. Since the non-negative self-adjoint operator $L$ has compact resolvent, it follows from the stability theorem for relatively bounded perturbations (see \cite[Thm. IV.3.17]{kato1}) that $A_0$ has compact resolvent. Moreover, from \cite[Thm. V.4.11]{kato1} and the fact that $\Gamma$ is $L$-bounded with relative bound $< 1$, the operator $A_0$ is semibounded from below with lower bound $\gamma_0 \geq - \max \bigg( \frac{a(\eps)}{1 - \eps}; a(\eps) \bigg)$, where $a(\eps)$ is a choice for the constant $a$ appearing in the inequality \eqref{eq:rel_bound}, when $b = \eps$. \\
We now set, once and for all, $M : = |\gamma_0| + 1$. With this choice, $A_0 > 0$. The last claim of the Lemma is an immediate consequence of the self-adjointness of $A_0$ and of the compactness of $A_0^{-1}$.
\end{proof}

\begin{remark}\label{rem_weak_ODE}
Problem \eqref{eq: limit problem} (or, equivalently, problem \eqref{classic_system}) is understood in the weak sense as follows:
\begin{multline}\label{weak_system}
\int_0^{|\partial\Omega|}u''\phi_1''+2\kappa^2 u'\phi_1'+(\kappa w)'\phi_1'+2\kappa w'\phi_1'+2w'\phi_2'+u'(\kappa\phi_2)'+2\kappa u'\phi_2'+k^2w\phi_2 ds\\
=\eta\int_0^{|\partial\Omega|}u\phi_1 ds\,,\ \ \ \forall \phi_1\in H^2_p((0,|\partial\Omega|)), \phi_2\in H^1_p((0,|\partial\Omega|)).
\end{multline}
However, being the coefficients of the higher order terms constant, and the other coefficients smooth, any solution to \eqref{weak_system} turns out to be a classical solution.
\end{remark}

\subsection{Spectral convergence}\label{sp_co}
In this subsection we recall a few definitions of convergence of operators and their resolvents as well as related results of spectral convergence. In fact, in the next section we will prove the convergence of the eigenvalues of $A_h$ to those of $A_0$ by means of the generalised compact convergence in the sense of Stummel-Vainikko  \cite{stummel,vainikko_1,vainikko_2}.

We note that the domains $\omega_h$ vary with $h$, thus also the Hilbert spaces for $A_h$ vary as well. In order to have a common functional setting to compare the operators $A_h$ we need to introduce the notion of $\cE$-convergence of the resolvent operators.

Let $\mathcal H_{h}$, $h\in[0,\bar h)$, be a family of Hilbert spaces. We assume the existence of a family of linear operators $\cE_h\in\mathcal L(\mathcal H_0,\mathcal H_h)$ such that, for all $u_0\in\cH_0$
\begin{equation}\label{cond_ext}
\|\cE_hu_0\|_{\mathcal H_h}\rightarrow\|u_0\|_{\mathcal H_0}\,,\ \ \ {\rm as\ }h\rightarrow 0^+.
\end{equation}
\begin{definition}
Let $\mathcal H_h$ and $\cE_h$ be as above.
\begin{enumerate}[label=(\roman*)]
\item Let $u_h\in \cH_h$. We say that $u_h$ $\cE$-converges to $u_0$ if $\|u_h-\cE_h u_0\|_{\cH_h}\rightarrow 0$ as $h\rightarrow 0^+$. We write $u_h\xrightarrow{\cE}u_0$.
\item Let $B_h\in\mathcal L(\cH_h)$. We say that $B_h$ $\cE\cE$-converges to  $B_0$ if $B_h u_h\xrightarrow{\cE}B_0u_0$ whenever $u_n\xrightarrow{\cE}u_0$. We write $B_h\xrightarrow{\cE\cE}B_0$.
\item Let $B_h\in\mathcal L(\cH_h)$. We say that $B_h$ compactly converges to $B_0$, and we write $B_h\xrightarrow{C}B_0$, if the following two conditions are satisfied
\begin{enumerate}[(a)]
\item $B_h\xrightarrow{\cE\cE}B_0$ as $h\rightarrow 0^+$;
\item for any family $u_h\in \cH_h$ such that $\|u_h\|_{\cH_h}=1$ for all $h\in(0,\bar h)$, there exists a subsequence $\{B_{h_k}u_{h_k}\}_{k\in\mathbb N}$ with $h_k\rightarrow 0^+$ as $k\rightarrow+\infty$, and $u_0\in\cH_0$ such that $B_{h_k}u_{h_k}\xrightarrow{\cE}u_0$ as $k\rightarrow+\infty$.
\end{enumerate}
\end{enumerate}
\end{definition}
Compact convergence of compact operators implies spectral convergence, as stated in the following theorem.
\begin{theorem}\label{sp_conv}
Let $A_h$, $h\in[0,\bar h)$ be a family of positive, self-adjoint differential operators on $\cH_h$ with domain $\cD(A_h)\subset \cH_h$. Assume moreover that
\begin{enumerate}[label=(\roman*)]
\item The resolvent operator $B_h:=A_h^{-1}$ is compact for all $h\in[0,\bar h)$;
\item $B_h\xrightarrow{C}B_0$ as $h\rightarrow 0^+$.
\end{enumerate}
Then, if $\lambda_0$ is an eigenvalue of $A_0$, there exists a sequence of eigenvalues $\lambda_h$ of $A_h$ such that $\lambda_h\rightarrow\lambda_0$ as $h\rightarrow 0^+$. Conversely, if $\lambda_h$ is an eigenvalue
of $A_h$ for all $h\in(0,\bar h)$, and $\lambda_h\rightarrow\lambda_0$, then $\lambda_0$ is an eigenvalue of $A_0$.
\end{theorem}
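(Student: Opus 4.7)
The plan is to transfer the spectral question from the unbounded operators $A_h$ to the compact operators $B_h := A_h^{-1}$: since each $A_h$ is positive and self-adjoint with compact resolvent, $\lambda > 0$ lies in $\sigma(A_h)$ if and only if $\mu = 1/\lambda$ lies in $\sigma(B_h) \setminus \{0\}$, with the same finite-dimensional eigenspace. Hence it suffices to show that the nonzero spectrum of $B_h$ converges, in both directions, to the nonzero spectrum of $B_0$. The engine is the Riesz spectral projection. Given an isolated eigenvalue $\mu_0 \neq 0$ of $B_0$ of multiplicity $m$, choose $r > 0$ so small that $\overline{B(\mu_0,r)} \cap \sigma(B_0) = \{\mu_0\}$ and $0 \notin \overline{B(\mu_0,r)}$, set $\Gamma_r := \{z \in \C : |z-\mu_0| = r\}$, and define
\[
P_h := -\frac{1}{2\pi \I}\oint_{\Gamma_r} (B_h - z)^{-1}\,\rd z, \qquad h \in [0,\bar h).
\]

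The key technical step is \emph{uniform invertibility on the contour}: there exist $h_0 > 0$ and $C>0$ such that $\Gamma_r \subset \varrho(B_h)$ and $\|(B_h - z)^{-1}\|_{\cL(\cH_h)} \leq C$ for every $z \in \Gamma_r$ and every $h \in [0,h_0)$. I would argue by contradiction: a failure of this bound produces sequences $h_n \to 0^+$, $z_n \in \Gamma_r$ with $z_n \to z^* \in \Gamma_r$, and $u_n \in \cH_{h_n}$ with $\|u_n\|_{\cH_{h_n}} = 1$ and $(B_{h_n} - z_n)u_n \to 0$ in $\cH_{h_n}$. Property (b) of compact convergence, applied to $(u_n)$, yields a subsequence along which $B_{h_n} u_n \xrightarrow{\cE} v_0$ for some $v_0 \in \cH_0$; then $z_n u_n = B_{h_n}u_n - (B_{h_n} - z_n)u_n \xrightarrow{\cE} v_0$, and so $u_n \xrightarrow{\cE} v_0 / z^*$. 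The $\cE\cE$-convergence of $B_h$ then forces $B_0 v_0 = z^* v_0$, with $\|v_0/z^*\|_{\cH_0} = 1/|z^*| \neq 0$ thanks to \eqref{cond_ext}. Thus $z^* \in \sigma(B_0) \cap \Gamma_r$, contradicting the choice of $r$. Combining this uniform bound with the $\cE\cE$-convergence of $B_h$ and a standard manipulation of a resolvent-difference identity rewritten through the extension operators $\cE_h$ yields $(B_h - z)^{-1} \xrightarrow{\cE\cE} (B_0-z)^{-1}$ uniformly on $\Gamma_r$, and therefore $P_h \xrightarrow{\cE\cE} P_0$.

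Next comes the stability of the spectral multiplicity. Because $A_h$, and thus $B_h$, are self-adjoint and $\Gamma_r$ is symmetric under complex conjugation, each $P_h$ is an orthogonal projection and $\|P_h\|_{\cL(\cH_h)} = 1$. The projection $P_0$ has rank $m$; fixing an orthonormal basis $e_1, \dots, e_m$ of $\operatorname{Range}(P_0)$, the vectors $P_h \cE_h e_j$ belong to $\operatorname{Range}(P_h)$, and their Gram matrix converges to the $m \times m$ identity by the $\cE\cE$-convergence of $P_h$ and \eqref{cond_ext}. Therefore $\operatorname{rank}(P_h) \geq m$ for all $h$ small enough; in particular $\sigma(B_h) \cap B(\mu_0,r) \neq \emptyset$. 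A diagonal extraction with $r = 1/k$ produces eigenvalues $\mu_h \in \sigma(B_h)$ with $\mu_h \to \mu_0$, and hence $\lambda_h := 1/\mu_h \in \sigma(A_h)$ satisfy $\lambda_h \to \lambda_0$.

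The converse direction is a direct application of the definition of compact convergence. If $\lambda_h \in \sigma(A_h)$ with $\lambda_h \to \lambda_0 > 0$, pick unit eigenvectors $u_h$; setting $\mu_h := 1/\lambda_h$, one has $B_h u_h = \mu_h u_h$ with $\mu_h \to \mu_0 := 1/\lambda_0 \neq 0$. By property (b) a subsequence satisfies $B_{h_k}u_{h_k} \xrightarrow{\cE} v_0$ for some $v_0 \in \cH_0$; dividing by $\mu_{h_k}$ gives $u_{h_k} \xrightarrow{\cE} v_0/\mu_0$, and the $\cE\cE$-convergence of $B_h$ produces $B_0 v_0 = \mu_0 v_0$ with $\|v_0\|_{\cH_0} = |\mu_0| \neq 0$, so $\mu_0 \in \sigma(B_0)$ and $\lambda_0 \in \sigma(A_0)$. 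I expect the main obstacle to be precisely the uniform resolvent bound on $\Gamma_r$, where the full strength of condition (ii)(b) (as opposed to mere $\cE\cE$-convergence) is genuinely used, and where one must track carefully which convergences live in $\cH_h$ and which in $\cH_0$.
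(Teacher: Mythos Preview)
The paper does not prove Theorem~\ref{sp_conv} at all: it simply cites \cite[Thm.~4.10]{ACLC}, \cite[Thm.~4.2]{AFL17}, and \cite[Prop.~2.6]{bogli} for the proof. Your sketch is correct in its main lines and follows precisely the standard Riesz--projection route that those references use: transfer to the compact resolvents $B_h$, establish a uniform resolvent bound on a small circle around $\mu_0=1/\lambda_0$ by contradiction using condition~(ii)(b), deduce $\cE\cE$-convergence of the spectral projections, and read off the two spectral inclusions. Two small points worth tightening: (1) the passage from pointwise $\cE\cE$-convergence of $(B_h-z)^{-1}$ on $\Gamma_r$ to $\cE\cE$-convergence of the contour integral $P_h$ needs a domination argument (the uniform resolvent bound supplies it, but you should say so explicitly rather than invoking a ``resolvent-difference identity'', which is awkward across varying spaces); (2) in the converse direction you silently assume $\lambda_0>0$, which follows because compact convergence forces $\sup_h\|B_h\|<\infty$ (an easy consequence of~(ii)(b)), hence $\lambda_h\geq c>0$. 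Neither point is a genuine gap.
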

We refer to \cite[Thm. 4.10]{ACLC} and \cite[Thm. 4.2]{AFL17} for the proof of Theorem \ref{sp_conv}. We also refer to \cite[Prop. 2.6]{bogli} where a spectral convergence theorem is proved for sequences of closed operators with compact resolvent. Note that an alternative approach to the spectral convergence of operators defined on variable Hilbert spaces has been proposed in the book \cite{post}. It would be interesting to implement this approach in order to recover Theorem \ref{main}.
\begin{remark}\label{rem_sp_proj}
For the purposes of the present article we have presented a simplified version of Theorem \ref{sp_conv}. Namely, we have only stated the pointwise convergence of the eigenvalues provided the resolvent operators compactly converge. Actually, if the assumptions of Theorem \ref{sp_conv} are satisfied we have a stronger spectral convergence: the projection on the generalised eigenspace $\cE$-converges pointwise. For the interested reader we refer to \cite[\S 4]{ACLC} and to \cite[\S 4]{AFL17}.
\end{remark}

\section{Proof of the main result}\label{proof_main}

The proof of Theorem \ref{main} will follow from a suitable application of Theorem \ref{sp_conv}. Through all this section, $A_h,A_0$ will be the positive, self-adjoint operators associated with problems \eqref{weak_N_shift} and \eqref{eq: limit problem}, respectively, namely the operators introduced in Subsection \ref{eig_prob}. We denote the resolvent operators of $A_h$ and $A_0$ by
\begin{equation} \label{eq: resolvents} B_0=A_0^{-1}, \quad \quad  B_h=A_h^{-1}, \quad h \in (0, \bar{h})
\end{equation}

For every $h \in (0,\bar h)$ we define $\cH_h = L^2(\omega_h;h^{-1}dx )$, $\cH_0 = L^2((0, |\p \Omega|))$, where $L^2(\omega_h;h^{-1}dx)$ is the space $L^2(\omega_h)$ endowed with the norm $\|\cdot\|_{L^2(\omega_h;h^{-1}dx)}=h^{-1/2}\|\cdot\|_{L^2(\omega_h)}$.

Let $\cE_h : \cH_0 \to \cH_h$ be the extension operator defined by $(\cE_h u\circ\Phi_h)(s,t) = u(s)$ for a.a. $s \in \partial\Omega$, $t \in (0,1)$. Note that
\[
\lim_{h\rightarrow 0^+}\norma{\cE_h u}_{\cH_h} = \norma{u}_{\cH_0},
\]
so the family of extension operators $\{\cE_h\}_{h \in (0,\bar h)}$  satisfies \eqref{cond_ext}. In particular, $\{\cE_h\}_{h \in (0,\bar h)}$ is an admissible connecting system for the family of Hilbert spaces $\{\cH_h\}_{h \in [0,\bar h)}$.

Theorem \ref{main} is a corollary of the following theorem and of Theorem \ref{sp_conv}.
\begin{theorem}\label{main_2}
Let $B_h$, $h \in [0, \bar{h})$ be defined by \eqref{eq: resolvents}. Then $B_h$ compactly converges to $B_0$ as $h \to 0^+$.
\end{theorem}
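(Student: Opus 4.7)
The plan is to verify the two defining properties of compact convergence for the family $\{B_h\}_{h \in (0, \bar h)}$ and then invoke Theorem \ref{sp_conv}. The natural strategy is to pull the problem back to the fixed reference cylinder $\Sigma = (0,|\partial\Omega|) \times (0,1)$ via the diffeomorphism $\Phi_h$ from Subsection 2.2, exploiting the fact that, in curvilinear coordinates, every $t$-derivative carries an extra factor $h^{-1}$ coming from the chain rule. For both the $\cE\cE$-convergence of $B_h$ to $B_0$ and the compactness condition, I will apply the same underlying argument to the sequence $v_h := B_h u_h$, starting either from an $\cE$-convergent family $u_h$ or from an arbitrary norm-bounded one in $\cH_h$.

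First I would derive scaled a priori energy estimates. Testing the weak identity $Q_h(v_h, \phi) = \langle u_h, \phi\rangle_{L^2(\omega_h)}$ with $\phi = v_h$, dividing by $h$, and expanding through \eqref{wh_coordinates} and \eqref{hess_local} for the pullback $\tilde v_h := v_h \circ \Phi_h \in H^2(\Sigma)$, I obtain a uniform bound
\[
\|\tilde v_h\|_{L^2(\Sigma)}^2 + \|\partial_{ss}\tilde v_h\|_{L^2(\Sigma)}^2 + h^{-2}\|\partial_{st}\tilde v_h\|_{L^2(\Sigma)}^2 + h^{-4}\|\partial_{tt}\tilde v_h\|_{L^2(\Sigma)}^2 \leq C,
\]
after absorbing the mixed terms of \eqref{hess_local} by Cauchy--Schwarz and a scaled Poincaré inequality in $t$. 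That same Poincaré argument additionally yields $\|\partial_t \tilde v_h\|_{L^2(\Sigma)} = O(h)$, so $h^{-1}\partial_t \tilde v_h$ is uniformly bounded in $L^2(\Sigma)$. Passing to a subsequence I extract weak limits $\tilde v_h \rightharpoonup u$ and $h^{-1}\partial_t \tilde v_h \rightharpoonup w$ in $L^2(\Sigma)$; since $\partial_t \tilde v_h$ and $h^{-1}\partial_{tt}\tilde v_h$ both vanish in $L^2(\Sigma)$, one has $\partial_t u = \partial_t w = 0$, hence $u = u(s) \in H^2_p((0,|\partial\Omega|))$ and $w = w(s) \in H^1_p((0,|\partial\Omega|))$.

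Second, I pass to the limit in the weak identity against two families of test functions, described in curvilinear coordinates as $\tilde\phi(s,t) = \phi_1(s)$ with $\phi_1 \in H^2_p((0,|\partial\Omega|))$, and $\tilde\phi(s,t) = h\, t\, \phi_2(s)$ with $\phi_2 \in H^1_p((0,|\partial\Omega|))$. Applying \eqref{hess_local} term by term and using the uniform convergence $(1 - h t\kappa(s))^{-k} \to 1$ together with the weak limits of the previous step, each of the nine summands admits a definite limit. For $\tilde\phi = \phi_1(s)$ only the summands involving $\partial_{ss}\tilde\phi$ or $\partial_s\tilde\phi$ survive, and after an integration by parts on the term $-\int \kappa w \phi_1''$ the limit reproduces exactly the first equation of \eqref{weak_system}; for $\tilde\phi = ht\phi_2(s)$ the surviving summands involve $\partial_t\tilde\phi$ and $\partial_{st}\tilde\phi$, which cancel the $h^{-1}$ and $h^{-2}$ factors in the Hessian formula and yield the second equation. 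By uniqueness of the weak solution of \eqref{classic_system}, the limit $u$ coincides with $B_0 u_0$ (modulo the spectral shift), so convergence of the whole sequence follows. Strong $\cE$-convergence $\|v_h - \cE_h u\|_{\cH_h} \to 0$ is then obtained by combining weak $L^2$-convergence with convergence of energies, the latter being deduced from the test $\phi = v_h$ and the limits already established.

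The main technical obstacle is the bookkeeping of \eqref{hess_local}: one must correctly identify which of the nine summands contribute to each limit equation and which vanish, tracking each factor of $h$ and each curvature coefficient precisely. A particularly delicate point is the $\partial_{tt}\tilde v_h/h^2$ contribution, which is only bounded in $L^2(\Sigma)$ and therefore does not converge weakly on its own; this is circumvented by choosing test functions for which $\partial_{tt}\tilde\phi \equiv 0$, so that this contribution is never activated. A secondary difficulty is the transverse Poincaré estimate $\|\partial_t\tilde v_h\|_{L^2(\Sigma)} = O(h)$, which I would establish by decomposing $\tilde v_h(s,\cdot)$ into an affine-in-$t$ part plus a remainder controlled pointwise in $s$ by $\|\partial_{tt}\tilde v_h(s,\cdot)\|_{L^2_t}$. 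Once these ingredients are in place, the remaining calculations, while lengthy, are routine applications of the identities from Subsection 2.2.
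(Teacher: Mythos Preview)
There is a genuine gap in your argument, and it sits at exactly the point you single out as ``a secondary difficulty'': the bound $\|\partial_t \tilde v_h\|_{L^2(\Sigma)} = O(h)$. Your proposed affine decomposition $\tilde v_h(s,t) = a_h(s) + b_h(s)\,t + r_h(s,t)$ does indeed give $\|\partial_t r_h\|_{L^2} \leq C\|\partial_{tt}\tilde v_h\|_{L^2} = O(h^2)$, but it says nothing at all about the slope $b_h(s)$. Since the Neumann conditions here are $\partial^2_{\nu\nu}u_h = 0$ rather than $\partial_\nu u_h = 0$, there is no trace condition forcing $b_h$ to be small, and without control of $b_h$ your transverse Poincar\'e estimate simply fails. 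This is not a minor technicality: if $\kappa \equiv 0$ the bound is genuinely false (cf.\ Remark~\ref{rmk:polygons}), so the argument must somewhere use that $\partial\Omega$ is a closed curve with nontrivial curvature.

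The paper closes this gap by a completely different mechanism. The Hessian formula \eqref{hess_local} contributes a term $\kappa(s)^2 h^{-2}(\partial_t\tilde u_h)^2$ to the energy (line \eqref{9}), so coercivity gives $\|\kappa\,\partial_t\tilde u_h\|_{L^2(\Sigma)}/h$ bounded directly, with the curvature weight. To remove the weight one invokes Gauss--Bonnet: since $\int_{\partial\Omega}\kappa\,ds = 2\pi$, there is an open arc $J_a$ on which $|\kappa|\geq a>0$, and a Poincar\'e--Wirtinger inequality with mean taken over $J_a\times(0,1)$ then yields $\|\partial_t\tilde u_h\|_{L^2(\Sigma)}/h \leq C\big(\|\partial^2_{tt}\tilde u_h\|/h + \|\partial^2_{st}\tilde u_h\|/h + \|\kappa\,\partial_t\tilde u_h\|/h\big)$. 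This is the key claim \eqref{pw3} in the paper. Relatedly, your claim that the mixed terms are absorbed ``by Cauchy--Schwarz and a scaled Poincar\'e inequality in $t$'' is too optimistic: the term \eqref{5}, namely $-\kappa h^{-1}\partial^2_{ss}\tilde u_h\,\partial_t\tilde u_h$, cannot be absorbed that way and requires an integration by parts in $s$ together with the interpolation estimate $\|\partial_s\tilde u_h\|\leq\varepsilon\|\partial^2_{ss}\tilde u_h\|+c\varepsilon^{-1}\|\tilde u_h\|$ before one can close the coercivity (this is \eqref{itermediate_4} in the paper). Once the coercivity and the $O(h)$ bound on $\partial_t\tilde v_h$ are in place, the rest of your outline---the two families of test functions and the identification of the limit---matches the paper's Steps~2b--2c.
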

\begin{proof}
Since we are only interested in the limit as $h\rightarrow 0^+$ we may restrict to $h\in[0,\min\{1,\bar h/2\}]$. By definition of compact convergence we have to prove the following two claims:
\begin{enumerate}[label=(\roman*)]
\item for every sequence $\{f_h\}_{h \in (0,\bar h)}$, $f_h \in \cH_h$, $\cE$-convergent to $f \in \cH_0$, we have
\[
\norma{B_h f_h - \cE_h B_0 f}_{\cH_h} \to 0
\]
as $h\rightarrow 0^+$;
\item for every sequence $\{f_h\}_{h \in (0,\bar h)}$, $f_h \in \cH_h$, $\norma{f_h}_{\cH_h} = 1$, $h \in (0,\bar h)$, there exists a subsequence $\{B_{h_k}f_{h_k}\}_{k\in\mathbb N}$ with $h_k\rightarrow 0^+$ as $k\rightarrow+\infty$, and a function $u_0 \in \cH_0$ such that
    \[
    \norma{B_{h_k}f_{h_k} - \cE_{h_k}u_0}_{\cH_{h_k}} \to 0
    \]
as $k\rightarrow+\infty$.
\end{enumerate}
Consider the Poisson problem with datum $f_h\in L^2(\omega_h)$ associated with the operator $A_h$, namely
$$
\int_{\omega_h}\left( D^2u_h:D^2\varphi + M\, u_h\, \varphi \right) dx =\int_{\omega_h}f_h\,\varphi dx\,,\ \ \ \forall \varphi\in H^2(\omega_h),
$$
which is rewritten in the coordinate system $(s,t)$ (see \eqref{wh_coordinates}) as
\begin{multline} \label{eq: rescaled prob}
\int_{\Sigma} \left(D^2u_h:D^2\varphi+ M\, u_h\,\varphi\right)\circ\Phi_h(s,t) (1-ht\kappa(s))dtds \\
= \int_{\Sigma} (f_h\, \varphi)\circ\Phi_h(s,t)(1-ht\kappa(s))dtds,
\end{multline}
for all $\varphi\in H^2(\Sigma)$. Let us assume from the beginning that $f_h$ is as in the definition of compact convergence, that is, $f_h$ is uniformly bounded in the sequence of Hilbert spaces $\{\cH_h\}_{h\in(0,\bar h)}$. This means exactly that $\{\tilde{f_h}\}_{h\in(0,\bar h)}$ is uniformly bounded in $L^2(\Sigma)$, so that, up to a subsequence, we may assume that $\tilde{f}_h \wto \tilde f \in L^2(\Sigma)$. In particular, if $f_h\in\mathcal H_h$ $\cE$-converges to $f\in\mathcal H_0$, then $\tilde{f} := f \circ \Phi_h \in L^2(\Sigma)$ is the pullback of $f$ via $\Phi_h$.

From \eqref{hess_local} we deduce that
\begin{align} \label{eq: form in sigma1}
&(D^2u_h:D^2\varphi)\circ\Phi_h \\
&=\frac{\partial^2_{ss}(u_h\circ\Phi_h)\partial^2_{ss}(\varphi\circ\Phi_h)}{(1-ht\kappa(s))^4}+ \frac{2\partial^2_{st}(u_h\circ\Phi_h)\partial^2_{st}(\varphi\circ\Phi_h)}{h^2(1-ht\kappa(s))^2}+ \frac{1}{h^4}\partial^2_{tt}(u_h\circ\Phi_h)\partial^2_{tt}(\varphi\circ\Phi_h)\label{123}\\
&+\frac{ht\kappa'(s)}{(1-ht\kappa(s))^5}(\partial_s(u_h\circ\Phi_h)\partial^2_{ss}(\varphi\circ\Phi_h)+\partial^2_{ss}(u_h\circ\Phi_h)\partial_s(\varphi\circ\Phi_h))\label{4}\\
&-\frac{\kappa(s)}{h(1-ht\kappa(s))^3}(\partial^2_{ss}(u_h\circ\Phi_h)\partial_t(\varphi\circ\Phi_h)+\partial_t(u_h\circ\Phi_h)\partial^2_{ss}(\varphi\circ\Phi_h)\label{5})\\
&+\frac{2\kappa(s)}{h(1-ht\kappa(s))^3}(\partial_s(u_h\circ\Phi_h)\partial^2_{st}(\varphi\circ\Phi_h)+\partial^2_{st}(u_h\circ\Phi_h)\partial_s(\varphi\circ\Phi_h))\label{6}\\
&-\frac{t\kappa(s)\kappa'(s)}{(1-ht\kappa(s))^4}(\partial_s(u_h\circ\Phi_h)\partial_t(\varphi\circ\Phi_h)+\partial_t(u_h\circ\Phi_h)\partial_s(\varphi\circ\Phi_h))\label{7}\\
&+\frac{(2\kappa(s)^2(1-ht\kappa(s))^2+h^2t^2\kappa'(s)^2)}{(1-ht\kappa(s))^6}\partial_s(u_h\circ\Phi_h)\partial_s(\varphi\circ\Phi_h)\label{8}\\
&+\frac{\kappa(s)^2}{h^2(1-ht\kappa(s))^2}\partial_t(u_h\circ\Phi_h)\partial_t(\varphi\circ\Phi_h)\label{9}.
\end{align}

\textbf{Step 1 (coercivity estimate):} let $\tilde{u}_h = u_h \circ \Phi_h$. We will prove that there exists a constant $C > 0$ such that for all $h \in (0, \bar{h}/2)$
\begin{multline}\label{coerciv_est}
\left(\frac{\norma{\partial^2_{tt} \tilde{u}_h}^2_{L^2(\Sigma)}}{h^4} +  \frac{\norma{\partial^2_{st} \tilde{u}_h}^2_{L^2(\Sigma)}}{h^2} + \frac{\norma{\kappa \partial_{t} \tilde{u}_h}_{L^2(\Sigma)}^2}{h^2} \right) \\
+ \left(\norma{\partial^2_{ss} \tilde{u}_h}^2_{L^2(\Sigma)} + \norma{\kappa \partial_{s} \tilde{u}_h}^2_{L^2(\Sigma)}\right) + \norma{\tilde u_h}^2_{L^2(\Sigma)} \leq C \norma{\tilde{f}_h}_{L^2(\Sigma)}^2
\end{multline}

To shorten the notation, let us set $\rho(s,t):=1-ht\kappa(s)$. Note that since $h\leq\bar h/2$, we that $0<c_1\leq\rho(s,t)\leq c_2$ for all $(s,t)\in\Sigma$, with $c_1,c_2$ independent on $h$. Choose $\varphi=u_{h}$ in \eqref{eq: rescaled prob}. Note that the first three summands of \eqref{eq: form in sigma1}, namely the three terms in \eqref{123}, equal respectively $\left\|\frac{\partial_{ss}\tilde u_h}{\rho^{3/2}}\right\|_{L^2(\Sigma)}^2$, $2\left\|\frac{\partial_{st}\tilde u_h}{h\rho^{1/2}}\right\|_{L^2(\Sigma)}^2$, and $\left\|\rho^{1/2}\frac{\partial_{tt}\tilde u_h}{h^2}\right\|_{L^2(\Sigma)}^2$. The last two terms of \eqref{eq: form in sigma1}, namely \eqref{8} and \eqref{9}, equal $\left(2\left\|\frac{\kappa\partial_{s}\tilde u_h}{\rho^{3/2}}\right\|_{L^2(\Sigma)}^2+\left\|\frac{ht\kappa'\partial_{s}\tilde u_h}{\rho^{5/2}}\right\|_{L^2(\Sigma)}^2\right)$ and $\left\|\frac{\kappa\partial_t\tilde u_h}{h\rho^{1/2}}\right\|_{L^2(\Sigma)}^2$, respectively. As for \eqref{4}, observe that for any $\delta>0$, and any $\varepsilon>0$ sufficiently small, we have
\begin{equation}\label{intermediate}
\begin{split}
&\int_{\Sigma}\frac{2ht\kappa'(s)}{(1-ht\kappa(s))^5}\partial_s\tilde{u}_h \partial^2_{ss}\tilde{u}_h(1-ht\kappa(s))dtds
\geq -C \int_{\Sigma}|\partial_s\tilde u_h\partial^2_{ss}\tilde u_h|dtds \\
&\geq -\frac{C\delta}{2}\|\partial^2_{ss}\tilde u_h\|_{L^2(\Sigma)}^2-\frac{C}{2\delta}\int_0^1\|\partial_s\tilde u_h\|_{L^2((0,|\partial\Omega|))}^2dt\\
& \geq -\frac{C\delta}{2}\|\partial^2_{ss}\tilde u_h\|_{L^2(\Sigma)}^2-\frac{C \varepsilon}{2\delta}\int_0^1\|\partial^2_{ss}\tilde u_h\|^2_{L^2((0,|\partial\Omega|))}dt - \frac{Cc}{2\delta\varepsilon}\int_0^1\|\tilde u_h\|_{L^2((0,|\partial\Omega|))}^2dt\\
&\geq -\frac{C(\delta^2+\varepsilon)}{2\delta}\|\partial^2_{ss}\tilde u_h\|_{L^2(\Sigma)}^2-\frac{CC'c}{2\delta\varepsilon}\int_{\Sigma}\tilde{u}_h^2 (1-ht\kappa(s))dtds
\end{split}
\end{equation}
where $C=\max_{(s,t)\in\overline{\Sigma},h\in[0,\bar  h/2]}\frac{ ht|\kappa'(s)|}{(1- ht\kappa(s))^4}$, $C'=\frac{1}{\min_{(s,t)\in\overline{\Sigma},h\in[0,\bar h/2]}(1- ht\kappa(s))}$. Here, to pass from the first to the second line we have used the Cauchy-Schwarz inequality, and to pass from the second to the third line we have used the classical interpolation inequality $\|v'\|_{L^2((a,b))}\leq\varepsilon \|v''\|_{L^2((a,b))}+\frac{c}{\varepsilon}\|v\|_{L^2((a,b))}$, valid for all $v\in H^2((a,b))$ and $\eps > 0$ sufficiently small, with $c>0$ depending only on $a,b$ (see e.g., \cite[\S 4.2, Theorem 2 and Corollary 7]{burenkov}).

Similarly, we estimate \eqref{6} and \eqref{7} (possibly re-defining the  constants $C,C'$):
\begin{multline}\label{itermediate_2}
\int_{\Sigma}\frac{4\kappa(s)}{h(1-ht\kappa(s))^3}\partial_s\tilde{u}_h \partial^2_{st}\tilde{u}_h (1-ht\kappa(s))dtds\\
\geq -\frac{C\varepsilon}{2\delta}\|\partial^2_{ss}\tilde u_h\|_{L^2(\Sigma)}^2-\frac{CC'c}{2\delta\varepsilon}\int_{\Sigma}\tilde{u}_h^2(1-ht\kappa(s))dtds-\frac{C\delta}{2}\frac{\|\partial^2_{st}\tilde u_h\|_{L^2(\Sigma)}^2}{h^2}
\end{multline}
and
\begin{multline}\label{itermediate_3}
\int_{\Sigma}\frac{2t\kappa(s)\kappa'(s)}{(1-ht\kappa(s))^4}\partial_s \tilde{u}_h \partial_{t} \tilde{u}_h (1-ht\kappa(s))dtds\\
\geq -\frac{C\varepsilon}{2\delta}\|\partial^2_{ss}\tilde u_h\|_{L^2(\Sigma)}^2-\frac{CC'c}{2\delta\varepsilon}\int_{\Sigma}\tilde{u}_h^2(1-ht\kappa(s))dtds-\frac{C\delta}{2}\frac{\|\kappa\partial_t\tilde u_h\|_{L^2(\Sigma)}^2}{h^2}
\end{multline}
where $\delta,\eps>0$ can be chosen arbitrarily small (and independent on $h$), and $C,C',c$ are positive constants not depending on $h$. Finally, we estimate \eqref{5}, which is the most delicate term:
\begin{equation}\label{itermediate_4}
\begin{split}
&\int_{\Sigma}\frac{2\kappa(s)}{h(1-ht\kappa(s))^3}\partial^2_{ss}(\tilde{u}_h)\partial_{t}(\tilde{u}_h)(1-ht\kappa(s))dtds\\
&\qquad \qquad \geq-2\left|\int_{\Sigma}\frac{\kappa(s)}{(1-ht\kappa(s))^2 }\partial_s\tilde u_h\frac{\partial^2_{st} \tilde u_h}{h}+\partial_s\left(\frac{\kappa(s)}{(1-ht\kappa(s))^2}\right)\partial_s\tilde u_h\frac{\partial_t\tilde u_h}{h}dsdt\right|\\
& \qquad \qquad \geq-\frac{C\varepsilon}{2\delta}\|\partial^2_{ss}\tilde u_h\|_{L^2(\Sigma)}^2-\frac{CC'c}{2\delta\varepsilon}\int_{\Sigma}(\tilde{u}_h)^2(1-ht\kappa(s))dtds-\frac{C\delta}{2}\frac{\|\partial^2_{st}\tilde u_h\|_{L^2(\Sigma)}^2}{h^2}\\
& \qquad \qquad \quad -\frac{C\varepsilon}{2\delta'}\|\partial^2_{ss}\tilde u_h\|_{L^2(\Sigma)}^2-\frac{CC'c}{2\delta'\varepsilon'}\int_{\Sigma}(\tilde{u}_h)^2(1-ht\kappa(s))dtds-\frac{C\delta'}{2}\frac{\|\partial_t\tilde u_h\|_{L^2(\Sigma)}^2}{h^2}
\end{split}
\end{equation}
We have used integration by parts and an elementary inequality to pass from the first to the second line of \eqref{itermediate_4}, and interpolation inequalities as done for \eqref{intermediate}, \eqref{itermediate_2}, and \eqref{itermediate_3} to pass from the second line to the last two lines of \eqref{itermediate_4}. The scope of this procedure is to have arbitrarily small coefficients in front of any term involving derivatives, at the price of having a large coefficient in front of the term which does not involve derivatives. Eventually, we will be able to control this term with the constant $M$ which we are free to choose.

Again, $C,C',c$ are positive constants independent on $h$, and $\varepsilon,\varepsilon',\delta,\delta'$ are positive constants which can be chosen arbitrarily. 
In order to conclude, and to establish \eqref{coerciv_est}, the last term in the last line of \eqref{itermediate_4} needs to be bounded from below by  $\frac{\|\kappa \partial_t\tilde u_h\|_{L^2(\Sigma)}^2}{h^2}$.
This is not trivial since $\kappa$ is allowed to vanish on a subset of $\p\Omega$ of positive measure and therefore the two norms are not equivalent. Nevertheless, $\kappa$ is bounded from below by a strictly positive constant on some open subset of $\partial\Omega$. Indeed, the Gauss-Bonnet Theorem implies that $\int_{\partial\Omega}\kappa(s)ds=2\pi$, therefore there exists $a>0$ and an open subset $J_a$ of $\p\Omega$ such that $|\kappa(s)|\geq a$ for all $s\in J_a$.\\[0.1cm]

\textbf{Claim:} there exists $C_a > 0$ such that
\begin{equation}\label{pw3}
\frac{\|\partial_t\tilde u_h\|_{L^2(\Sigma)}^2}{h^2} \leq 4C_a^2 \frac{\|\partial^2_{tt}\tilde u_h\|_{L^2(\Sigma)}^2}{h^2}+4C_a^2\frac{\|\partial^2_{st}\tilde u_h\|_{L^2(\Sigma)}^2}{h^2}+\frac{2}{a^2}\frac{\|\kappa\partial_t\tilde u_h\|_{L^2(\Sigma)}^2}{h^2}.
\end{equation}
\emph{Proof of the Claim.} Let us set $\Sigma_a:=J_a\times(0,1)\subset\Sigma$. Then there exists a constant $C_a>0$ such that $\left\|g-\frac{1}{|\Sigma_a|}\int_{\Sigma_a}g\right\|_{L^2(\Sigma)}\leq C_a\|\nabla g\|_{L^2(\Sigma)}$, for all $g\in H^1(\Sigma)$. This is a general version of the Poincaré-Wirtinger inequality. Thus,
\begin{equation}\label{pw}
\|g\|_{L^2(\Sigma)}=\left\|g-\frac{1}{|\Sigma_a|}\int_{\Sigma_a}g+\frac{1}{|\Sigma_a|}\int_{\Sigma_a}g\right\|_{L^2(\Sigma)}\leq C_a\|\nabla g\|_{L^2(\Sigma)}+\|g\|_{L^2(\Sigma_a)}.
\end{equation}
Inserting $g=\frac{\partial_t \tilde u_h}{h}$ in \eqref{pw} we deduce that
\begin{equation*}
\frac{1}{a}\left\|\frac{\kappa(s)\partial_t\tilde u_h}{h}\right\|_{L^2(\Sigma)}\geq \left\|\frac{\partial_t\tilde u_h}{h}\right\|_{L^2(\Sigma_a)} \geq\left\|\frac{\partial_t\tilde u_h}{h}\right\|_{L^2(\Sigma)}- C_a\left\|\frac{\nabla (\partial_t \tilde u_h)}{h}\right\|_{L^2(\Sigma)},
\end{equation*}
hence \eqref{pw3} holds.\\[0.1cm]

Using \eqref{pw3} on the right-hand side of \eqref{itermediate_4} gives the desired inequality. Note that, choosing suitable $\eps,\eps'\delta,\delta'>0$ in \eqref{intermediate}, \eqref{itermediate_2}, \eqref{itermediate_3}, \eqref{itermediate_4}, and possibly replacing $M$ by a larger (but fixed) constant, we deduce that there exist constants $c_0 > 0$, $c_1 > 1$ independent of $h$ such that
\begin{multline}\label{last_ineq_Q}
\int_{\Sigma} \left(|D^2 \tilde{u}_h|^2  + M \tilde{u}_h^2\right) \rho\,dtds \geq c_0 \left(\frac{\norma{\partial^2_{tt} \tilde{u}_h}^2_{L^2(\Sigma)}}{h^4} +  \frac{\norma{\partial^2_{st} \tilde{u}_h}^2_{L^2(\Sigma)}}{h^2} + \frac{\norma{\kappa \partial_{t} \tilde{u}_h}_{L^2(\Sigma)}^2}{h^2} \right) \\
+ c_0 \left(\norma{\partial^2_{ss} \tilde{u}_h}^2_{L^2(\Sigma)} + \norma{\kappa \partial_{s} \tilde{u}_h}^2_{L^2(\Sigma)}\right) + c_1 \norma{\tilde u_h}^2_{L^2(\Sigma)}
\end{multline}
for all $h\in[0,\bar h/2]$, and since the left-hand side is uniformly bounded in $h$, by  \eqref{eq: rescaled prob}, \eqref{last_ineq_Q} and a standard Cauchy-type estimate, we finally deduce that \eqref{coerciv_est} holds. \\[0.1cm]

\textbf{Step 2 (passage to the limit):} we prove now that there exists $u \in H^2(\Sigma)$, $w \in H^1(\Sigma)$, such that $\tilde{u}_h \to u$ in $L^2(\Sigma)$, $\frac{\partial_{t} \tilde{u}_h}{h} \to  w$ in $L^2(\Sigma)$. Moreover, $u, w$ are both constant in the variable $t$, and $(u,w)$ solves
\begin{equation} \label{eq: limiting problem}
\begin{cases}
u''''-2(\kappa^2 u')'-(\kappa w)''-2(\kappa w')'+Mu=\mathcal M\tilde f, & {\rm in\ }(0,|\partial\Omega|),\\
-2w''+\kappa^2 w-\kappa u''-2(\kappa u')'=0, & {\rm in\ } (0,|\partial\Omega|),\\
u^{(k)}(0)=u^{(k)}(|\partial\Omega|), & k=0,1,2,3,\\
w^{(k)}(0)=w^{(k)}(|\partial\Omega|), & k=0,1.
\end{cases}\vspace{0.2cm},
\end{equation}
where $\mathcal M$ is the averaging operator defined as
\[
\cM f(s) = \int_{0}^1 \tilde f(s, t) dt, \quad \textup{ a.a. $s \in (0, |\p \Omega|)$}.
\]
Step 1 implies that the sequences
\begin{equation} \label{eq: bounded seq}
\left\{\frac{\partial^2_{tt} \tilde{u}_h}{h^2}\right\}_{h\in(0,\bar h)},  \left\{\frac{\partial^2_{st} \tilde{u}_h}{h}\right\}_{h\in(0,\bar h)}, \left\{\frac{\kappa \partial_{t} \tilde{u}_h}{h}\right\}_{h\in(0,\bar h)}, \left\{ \partial^2_{ss} \tilde{u}_h\right\}_{h\in(0,\bar h)}, \left\{\kappa \partial_{s} \tilde{u}_h\right\}_{h\in(0,\bar h)}
\end{equation}
are uniformly bounded in $L^2(\Sigma)$ for all $h\in(0,\bar h)$. In particular, $\{\tilde{u}_h\}_{h \in (0,\bar h)}$ is a bounded sequence in $H^2(\Sigma)$. By the compact embedding of $H^2(\Sigma)$ in $L^2(\Sigma)$ we  deduce that there exists a function $u \in H^2(\Sigma)$ such that, up to a subsequence, $\tilde{u}_h \wto u$ in $H^2(\Sigma)$, strongly in $H^1(\Sigma)$. Note also that there exists a function $v\in L^2(\Sigma)$ such that, up to a subsequence, \begin{equation}\label{eq: weak limits 2}
 \frac{\partial^2_{tt} \tilde{u}_h}{h^2} \wto v
\end{equation}
in $L^2(\Sigma)$ as $h \to 0^+$.

Moreover, the sequence $\left\{\frac{ \partial_{t} \tilde{u}_h}{h}\right\}_{h\in(0,\bar h)}$ is uniformly bounded in $L^2(\Sigma)$. This follows from \eqref{pw3} and \eqref{eq: bounded seq}. Then, up to a subsequence, there exists a function $w \in H^1(\Sigma)$ such that
\begin{equation}
\label{eq: weak limits}
\frac{\partial_{t} \tilde{u}_h}{h} \wto  w
\end{equation}
in $H^1(\Sigma)$ as $h \to 0^+$, and, from the compact embedding of $H^1(\Sigma)$ in $L^2(\Sigma)$, $\frac{\partial_{t} \tilde{u}_h}{h} \to  w$ in $L^2(\Sigma)$.  In particular, $\partial_t \tilde{u}_h \to 0$ in $L^2(\Sigma)$, hence $\partial_t u = 0$ a.e. in $\Sigma$, so $u$ is constant in $t$.

We further deduce that the limiting function $w$ is constant in $t$, due to the fact that $\p^2_{tt} \tilde{u}_h/h \to 0$ in $L^2(\Sigma)$. We are now in position to pass to the limit in equation \eqref{eq: rescaled prob}. This will be done in three steps.\\[0.1cm]
\textbf{Step 2a:} we first choose $\varphi = h^2 \varsigma$ for some $\varsigma \in H^2(\Sigma)$. Then all the summands in \eqref{eq: rescaled prob}, which are listed in \eqref{eq: form in sigma1}, vanish as $h \to 0^+$, with the possible exception of
\[\frac{1}{h^4}\partial^2_{tt}(u_h\circ\Phi_h)\partial^2_{tt}(h^2 \varsigma).\]
From \eqref{eq: weak limits 2} and from equation \eqref{eq: rescaled prob} we then deduce that
\[
\int_{\Sigma} (D^2 u_h:D^2(h^2\varsigma))\circ\Phi_h(s,t)(1-th\kappa(s))\,dtds \to \int_{\Sigma} v\, \partial^2_{tt}\varsigma \,dsdt = 0
\]
as $h\rightarrow 0^+$. Since $\varsigma$ is an arbitrary function in $H^2(\Sigma)$, we conclude that $v = 0$. \\[0.1cm]
\textbf{Step 2b:} we now choose $\varphi(s,t) = h t \theta(s)$, for $(s,t) \in \Sigma$, where $\theta \in H^2_{p}((0, |\p \Omega|))$. Using $\varphi$ as test function in \eqref{eq: rescaled prob} we deduce that
\begin{multline*}
\int_{\Sigma} \left(\frac{2\partial^2_{st}(\tilde{u}_h)\partial^2_{st}\varphi}{h^2(1-ht\kappa(s))^2}
-\frac{\kappa(s)}{h(1-ht\kappa(s))^3}\partial^2_{ss}(\tilde{u}_h)\partial_t\varphi \right.\\
\left.+\frac{2\kappa(s)}{h(1-ht\kappa(s))^3}\partial_s(\tilde{u}_h)\partial^2_{st}\varphi -\frac{\kappa(s)^2}{h^2(1-ht\kappa(s))^2}\partial_t(\tilde{u}_h)\partial_t\varphi\right)(1-ht\kappa(s))\,dsdt = o(1)
\end{multline*}
as $h\rightarrow 0^+$. Recalling \eqref{eq: weak limits} and the specific choice of $\varphi$ we can now pass to the limit in the previous equation to deduce that
\begin{equation}\label{weak_w}
\int^{|\p \Omega|}_{0} 2 w'(s)\theta'(s) - \kappa(s) u''(s)\theta(s) + 2 \kappa(s) u'(s) \theta'(s) + \kappa^2 w(s) \theta(s)\,ds = 0
\end{equation}
for all $\theta \in H^2_{p}((0, |\p \Omega|))$, and, by approximation, for all $\theta\in H^1_p((0,\p \Omega))$.

Since the coefficient of the leading term in \eqref{weak_w} is constant, $u\in H^2_p((0,|\p\Omega|))$, and $\kappa$ is smooth, we deduce that $w\in H^2_p(\Sigma)$ and solves
\begin{equation}
\label{eq: limit eq 1}
-2 w'' + \kappa^2 w- \kappa u'' - 2 (\kappa u')'  = 0 \ \ \ {\rm on\ }(0,|\p\Omega|),
\end{equation}
where the equality is understood in the $L^2((0,|\p\Omega|))$ sense.\\[0.1cm]
\textbf{Step 2c:} we finally choose $\varphi(s,t) = \psi(s)$, for $s \in (0, |\p \Omega|)$, $\psi \in H^2_{p}((0, |\p \Omega|))$. Using $\varphi$ as test function in \eqref{eq: rescaled prob} we deduce that
\begin{multline*}
\int_{\Sigma}\left( \frac{\partial^2_{ss}\tilde{u}_h\partial^2_{ss}\varphi}{(1-ht\kappa(s))^4}
+\frac{ht\kappa'(s)}{(1-ht\kappa(s))^5}(\partial_s\tilde{u}_h\partial^2_{ss}\varphi+\partial^2_{ss}\tilde{u}_h\partial_s\varphi)\right.\\
-\frac{\kappa(s)}{h(1-ht\kappa(s))^3}(\partial_t\tilde{u}_h\partial^2_{ss}\varphi)
+\frac{2\kappa(s)}{h(1-ht\kappa(s))^3}(\partial^2_{st}\tilde{u}_h\partial_s\varphi)
-\frac{t\kappa(s)\kappa'(s)}{(1-ht\kappa(s))^4}(\partial_t\tilde{u}_h\partial_s\varphi) \\
\left.+\frac{(2\kappa(s)^2(1-ht\kappa(s))^2+h^2t^2\kappa'(s)^2)}{(1-ht\kappa(s))^6}\partial_s\tilde{u}_h\partial_s\varphi + M \tilde{u}_h \varphi\right)(1-th\kappa(s))\,dsdt\\
 = \int_{\Sigma} \tilde{f}_h \varphi (1-th\kappa(s))\,dsdt
\end{multline*}
and taking the limit as $h \to 0^+$ we deduce that
\begin{equation}\label{eq:step3}
\int_{\Sigma} \partial^2_{ss}u \partial^2_{ss}\psi  -\kappa(s) w \partial^2_{ss}\psi + 2\kappa(s)(\partial_{s}w \partial_s\psi) + 2\kappa(s)^2 \partial_s u \partial_s\psi + M u \psi\,  dsdt= \int_{\Sigma} \tilde f \psi\, dsdt
\end{equation}
Note that all the functions appearing in \eqref{eq:step3} are constant in $t$, with the possible exception of $\tilde f$.
As in Step 2, we deduce that $u\in H^4_p(\Sigma)$ and solves
\begin{equation} \label{eq: limit eq 2}
u''''-2(\kappa^2 u')'-(\kappa w)''-2(\kappa w')'+M u=\mathcal M\tilde f\ \ \ {\rm on\ }(0,|\p\Omega|).
\end{equation}
A standard bootstrap argument allows to conclude that $u,w$ are smooth. Altogether, we have found that the solution $\tilde{u}_h$ of \eqref{eq: rescaled prob} converges as $h \to 0^+$ to the solution $u$ of the system \eqref{eq: limiting problem}.
We can rewrite \eqref{eq: limiting problem} as a single equation by noting that the operator $\Delta_{\kappa}$ has a bounded inverse, so the second equation in \eqref{eq: limiting problem} yields
\[
w =  \Delta_{\kappa}^{-1}\left(\frac{\kappa}{2} u'' + (\kappa u')'\right),
\]
and upon substitution in the first equation in \eqref{eq: limiting problem} we recover \eqref{eq: limit problem}.\\[0.1cm]

{\bf Step 3 (proof of the compact convergence).}
From Steps 1-2 we see that if $f_h$ $\cE$-converges to $f \in \cH_0$ then
\[
\norma{B_h f_h - \cE_h B_0 f}_{\cH_h} = \norma{\tilde{u}_h - \cE_h u}_{L^2(\Sigma)} \to 0\,,\ \ \ {\rm as\ }h\rightarrow 0^+,
\]
and similarly, if $\{f_h\}_{h\in(0,\bar h)}$ is uniformly bounded in the sequence of Hilbert spaces $\cH_h$, with $\tilde{f}_h \wto \tilde f$ in $L^2(\Sigma)$, then from the considerations above, $\tilde{u}_h \wto u$ in $H^2(\Sigma)$ and $\tilde{u}_h \to u$ strongly in $L^2(\Sigma)$ so
\[
\norma{B_h f_h - \cE_h B_0 f}_{\cH_h} = \norma{\tilde{u}_h - \cE_h u}_{L^2(\Sigma)} \to 0\,,\ \ \ {\rm as\ }h\rightarrow 0^+,
\]
concluding the proof.
\end{proof}

\begin{remark}
Since we know that $\mu_1(h) = M$, we deduce a posteriori that the operator $\tilde A_0=A_0 - M$ is non-negative in $L^2((0, |\p \Omega|))$.
\end{remark}

\section{Final remarks}\label{remarks}

\subsection{Tubular neighbourhoods with variable size}
It is possible to consider, instead of $\omega_h$, a tubular neighbourhood of $\partial\Omega$ of variable size, namely
\begin{equation}\label{omega_h_g}
\omega_{h,g}:=\{x\in\omega_h: 0<{\rm dist}(x,\partial\Omega)<hg(s(x))\},
\end{equation}
for all $h\in (0,\bar h)$, where $s(x)$ is the nearest point to $x$ on $\partial\Omega$, and $g:\partial\Omega\rightarrow\mathbb R$ is a smooth function such that $0<g(s)<1$ for all $s\in\partial\Omega$. The computations can be carried out exactly as in the previous section. In particular, it follows that the limiting problem of  \eqref{classic_N} with $\omega_h$ replaced by $\omega_{h,g}$ reads
\begin{equation} \label{eq: limiting problem:g}
\begin{cases}
(gu'')'' - (\kappa g w)'' - 2 (\kappa g w')' - 2(k^2 u')'=\eta g u, \quad &\textup{in $(0, |\p \Omega|)$} \\
-2 (gw')' + \kappa^2 g w- \kappa g u'' - 2 (\kappa g u')'  = 0, \quad &\textup{in $(0, |\p \Omega|)$},\\
u^{(k)}(0)=u^{(k)}(|\partial\Omega|), & k=0,1,2,3,\\
w^{(k)}(0)=w^{(k)}(|\partial\Omega|), & k=0,1,
\end{cases}
\end{equation}
in the unknowns $u(s),w(s)$ and $\eta$ (the eigenvalue).
We refer e.g., to \cite[\S 4]{AFL17} for more details in the case of a thin set of the form $\{(x,y)\in\mathbb R^2:0<x<1, 0<y<hg(x)\}$.

\subsection{The unit circle}\label{disk}
Let us consider the case when $\Omega$ is the unit disk in $\mathbb R^2$. Then, for $h\in(0,1)$, $\omega_h=\{x\in\mathbb R^2:1-h<|x|<1\}$ is an annulus of width $h$. As customary, we look for solutions to problem \eqref{classic_N} of the form
\begin{equation}\label{eigen_annulus}
u_h^{\ell}(r,\theta)=v_h(r)(A_{\ell}\cos(\ell\theta)+B_{\ell}\sin(\ell\theta))
\end{equation}
Here we are using polar coordinates $(r,\theta)$ in $\mathbb R^2$. Plugging \eqref{eigen_annulus} in \eqref{classic_N} we obtain that the radial part $v_h$ satisfies the following ODE
\begin{equation}\label{classic_disk}
\begin{cases}
v_h''''+\frac{2v_h'''}{r}-\frac{(1+2\ell^2)v_h''}{r^2}+\frac{(1+2\ell^2)v_h'}{r^3}+\frac{\ell^2(\ell^2-4)v_h}{r^4}=\mu(h) v_h\,, & r\in(1-h,1),\\
v_h''(1-h)=v_h''(1)=0\,,\\
v_h'''(1)+v_h''(1)-(1+2\ell^2)v_h'(1)+3\ell^2 v_h(1)=0\,,\\
v_h'''(1-h)+\frac{v_h''(1-h)}{1-h}-\frac{(1+2\ell^2)v_h'(1-h)}{(1-h)^2}+\frac{3\ell^2 v_h(1-h)}{(1-h)^3}=0.
\end{cases}
\end{equation}
For readers interested in more details on how to obtain \eqref{classic_disk} we refer e.g., to \cite[\S 6]{chasmancircular}.

\begin{figure}[h]
\centering
\includegraphics[width=0.5\textwidth]{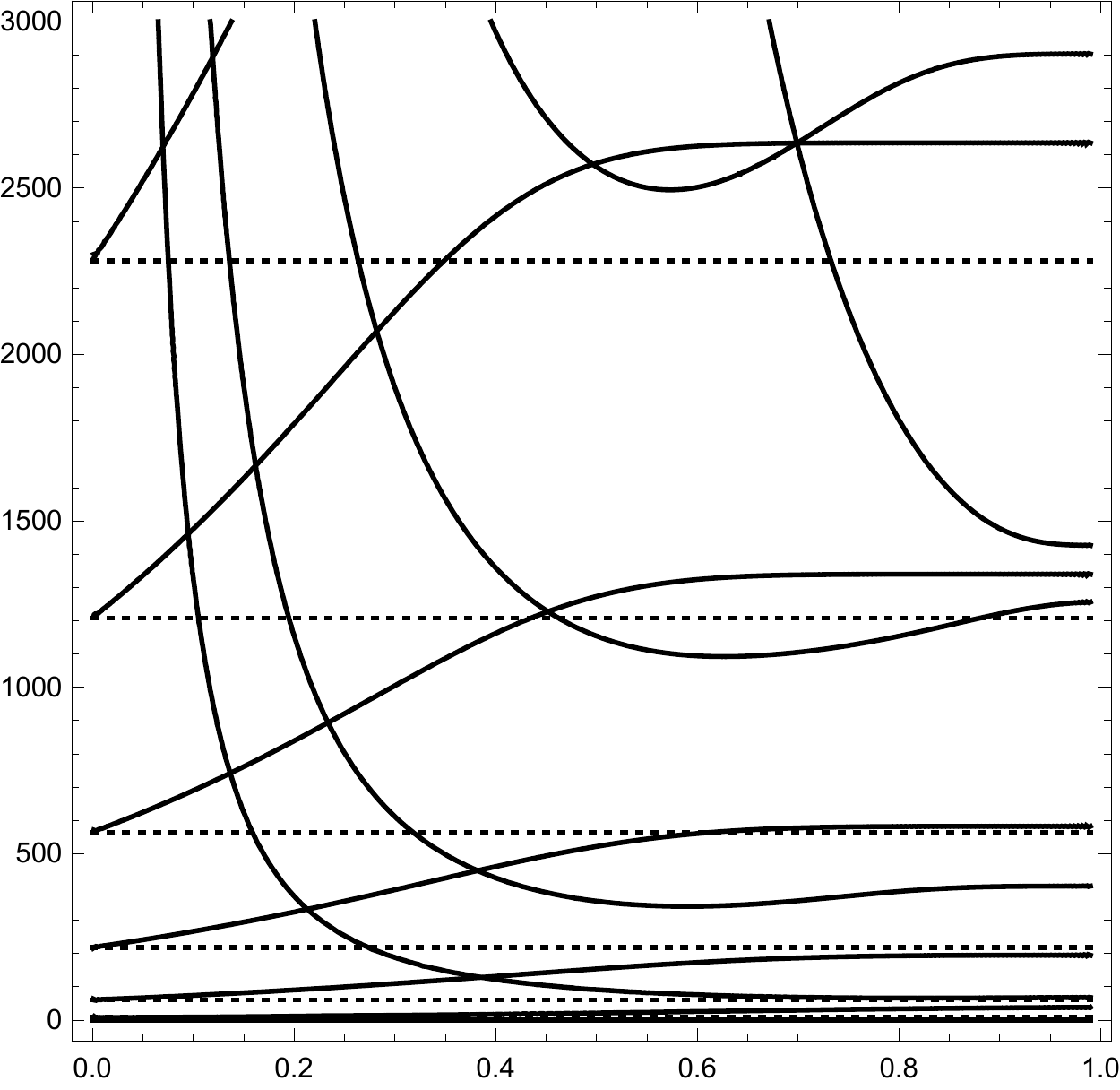}
\caption{Each analytic curve $\mu=\mu(h)$ describes the zero level set of ${\rm det}\mathcal B_{\ell}(h,\mu)$, for $\ell=0,...,7$. The horizontal dotted lines correspond to $\frac{2\ell^2(\ell^2-1)^2}{1+2\ell^2}$, $\ell=0,...,7$. The plot suggests that $\mu_j(h)$ are locally monotone near $ h=0$, see also {\rm\protect\cite{LP_ed}} for a related monotonicity result.}
\end{figure}

It is customary to verify that any solution of the differential equation in \eqref{classic_disk} is of the form
\begin{equation}\label{form_bessel}
v_h^{\ell}(r)=a_{\ell} J_{\ell}(\mu(h)^{1/4}r)+b_{\ell} I_{\ell}(\mu(h)^{1/4}r)+c_{\ell} Y_{\ell}(\mu(h)^{1/4}r)+d_{\ell} K_{\ell}(\mu(h)^{1/4}r),
\end{equation}
where  $J_{\ell},Y_{\ell}$ denote the Bessel function of first and second order of degree $\ell$, respectively, and $I_{\ell},K_{\ell}$ denote the modified Bessel function of first and second order of degree $\ell$, respectively. We refer e.g., to \cite[Prop. 1]{chasmancircular} and to \cite{LP_ed} for the justification of \eqref{form_bessel}.

Imposing the four boundary conditions we obtain a homogeneous system of four equations in four unknowns $a_{\ell},b_{\ell},c_{\ell},d_{\ell}$, which admits a non-zero solution if and only if the determinant of the associated matrix is zero. Namely, a number $\mu$ is an eigenvalue of \eqref{classic_disk} corresponding to an index $\ell\in\mathbb N$ and to $h\in(0,1)$ if and only if
$$ {\rm det}\mathcal B_{\ell}(h,\mu)=0,$$
where
\scriptsize
\begin{multline*}
\mathcal B_{\ell}(h,\mu):=\\
\begin{pmatrix}
J_{\ell}''(\mu^{1/4}) & I_{\ell}''(\mu^{1/4}) & Y_{\ell}''(\mu^{1/4}) & K_{\ell}''(\mu^{1/4})\\
\ &\ &\ &\\
J_{\ell}''(\mu^{1/4}(1-h)) & I_{\ell}''(\mu^{1/4}(1-h)) & Y_{\ell}''(\mu^{1/4}(1-h)) & J_{\ell}''(\mu^{1/4}(1-h))\\
\ &\ &\ &\\
J_{\ell}'''(\mu^{1/4}) & I_{\ell}'''(\mu^{1/4}) & Y_{\ell}'''(\mu^{1/4}) & K_{\ell}'''(\mu^{1/4})\\
-(1+2\ell^2)J_{\ell}'(\mu^{1/4}) & -(1+2\ell^2)I_{\ell}'(\mu^{1/4}) & -(1+2\ell^2)Y_{\ell}'(\mu^{1/4}) & -(1+2\ell^2)K_{\ell}'(\mu^{1/4})\\
+3\ell^2J_{\ell}(\mu^{1/4}) & +3\ell^2I_{\ell}(\mu^{1/4}) & +3\ell^2Y_{\ell}(\mu^{1/4}) & +3\ell^2K_{\ell}(\mu^{1/4})\\
\ &\ &\ &\\
J_{\ell}'''(\mu^{1/4}(1-h)) & I_{\ell}'''(\mu^{1/4}(1-h)) & Y_{\ell}'''(\mu^{1/4}(1-h)) & K_{\ell}'''(\mu^{1/4}(1-h))\\
-\frac{(1+2\ell^2)J_{\ell}'(\mu^{1/4}(1-h))}{(1-h)^2} & -\frac{(1+2\ell^2)I_{\ell}'(\mu^{1/4}(1-h))}{(1-h)^2} & -\frac{(1+2\ell^2)Y_{\ell}'(\mu^{1/4}(1-h))}{(1-h)^2} & -\frac{(1+2\ell^2)K_{\ell}'(\mu^{1/4}(1-h))}{(1-h)^2}\\
+\frac{3\ell^2J_{\ell}(\mu^{1/4}(1-h))}{(1-h)^3} & +\frac{3\ell^2I_{\ell}(\mu^{1/4}(1-h))}{(1-h)^3} & +\frac{3\ell^2Y_{\ell}(\mu^{1/4}(1-h))}{(1-h)^3} & +\frac{3\ell^2K_{\ell}(\mu^{1/4}(1-h))}{(1-h)^3}\\
\end{pmatrix}
\end{multline*}
\normalsize
Expanding the determinant in Taylor series with respect to $h$ near $h=0$, and using recurrence relations for Bessel functions and cross-products formulae, we obtain
$$
{\rm det}\mathcal B_{\ell}(h,\mu)=\frac{8\mu\left(\mu(1+2\ell^2)-2\ell^2(\ell^2-1)^2\right)}{\pi}h^2+O(h^3)\,,\ \ \ h\rightarrow 0^+.
$$
This implies that the limiting eigenvalues are of the form $\frac{2\ell^2(\ell^2-1)^2}{1+2\ell^2}$, see Fig.2. The computations, which we omit, are very long and technical. The reader interested in the details may refer to \cite{LP_ed} where analogous computations were performed in the case of a singularly perturbed eigenvalue problem for the Neumann Laplacian with density on a thin annulus.

On the other hand, choosing $\kappa(s)\equiv 1$ in \eqref{classic_system}, which corresponds to the case of the unit circle, it is standard to prove that all solutions of \eqref{classic_system} are given by $u(s)=A\cos(\ell\theta)+B\sin(\ell\theta)$, $w(s)=-\frac{3\ell^2}{1+2\ell^2}u(s)$, and $\eta=\frac{2\ell^2(\ell^2-1)^2}{1+2\ell^2}$, for $\ell\in\mathbb N$ and arbitrary constants $A,B\in\mathbb R$.

\subsection{On the restriction of the biharmonic operator on functions depending only on the tangential curvilinear coordinate}
It is well-known that the equality $\Delta_{\partial\Omega} u=\Delta U_{|_{\partial\Omega}}$ holds for all functions $u$ defined on $\partial\Omega$, where $U$ is defined on $\omega_h$ by $(U\circ\Phi_h)(s,t)=u(s)$ for all $(s,t)\in\Sigma=\Phi_h^{-1}(\omega_h)$. Here $\Delta_{\partial\Omega}$ denotes the second derivative with respect to the arc-length parameter $s$, since we are working in two space dimensions. The same identification is possible in any dimension $n\geq 2$. Namely, the Laplace-Beltrami operator acting on a function $u$  defined on a closed hypersurface $\partial\Omega$ in $\mathbb R^n$ bounding a smooth domain is the restriction to $\partial\Omega$ of the Laplacian acting on the function $U$ defined in a tubular neighbourhood of $\partial\Omega$ by extending $u$ constantly in the normal direction.

This identification is no longer true in the case of the biharmonic operator. In fact, by direct inspection one sees that $\Delta^2 U_{|_{\partial\Omega}}$ turns out to have an explicit representation, which for $n=2$ reads
\begin{equation}\label{limit_2}
\Delta^2 U_{|_{\partial\Omega}}=u''''(s)+4\kappa^2(s)u''(s)+5\kappa(s)\kappa'(s)u'(s),
\end{equation}
$s\in(0,|\partial\Omega|)$. Note that the corresponding differential operator coincides neither with $\Delta^2_{\p\Omega}$, nor with the operator associated with problem \eqref{classic_system}. This discrepancy is due to the different behaviour of the Laplace operator and the biharmonic operator on thin domains, as we have seen in the proof of Theorem \ref{main_2}; $\Delta^2$ change drastically in the limit if we neglect the contributions coming from normal derivatives, differently to what happens in the case of the Laplacian. This fact can also be deduced by the following remark: functions defined in $\omega_h$ which depend only on the tangential curvilinear coordinate $s$ do not satisfy in general the second boundary condition in \eqref{classic_N}.  In fact, if $f:\Sigma\rightarrow\mathbb R$ depends only on $s$, the second boundary condition for $f\circ\Phi_h^{-1}$ in \eqref{classic_N} reads, in coordinates $(s,t)$,
\begin{equation} \label{eq: second bc}
-\left(\frac{\kappa(s)}{(1-h t\kappa(s))^2}f'(s)\right)'_{|_{t=0,1}}=0,
\end{equation}
while the first boundary condition is trivially satisfied. Note that if $\Omega$ is strictly convex, then \eqref{eq: second bc} implies that $f' = 0$, hence $f$ is constant. On the other hand, in the case of the Neumann Laplacian on $\omega_h$, the boundary condition is trivially satisfied by any $f$ depending only on $s$.

\section*{Acknowledgements}\label{ackref}
The authors would like to thank the two referees for their remarks, which have substantially improved a previous version of this article. 
The first author acknowledges the support of the `Engineering and Physical Sciences Research Council' (EPSRC) through the grant EP/T000902/1, \textit{`A new paradigm for spectral localisation of operator pencils and analytic operator-valued functions'}.
The second author is member of the Gruppo Nazionale per le Strutture Algebriche, Geometriche e le loro Applicazioni (GNSAGA) of the I\-sti\-tuto Naziona\-le di Alta Matematica (INdAM).

\def\cprime{$'$} \def\cprime{$'$} \def\cprime{$'$} \def\cprime{$'$}
  \def\cprime{$'$}

\end{document}